\definecolor{vegasgold}{rgb}{0.77, 0.7, 0.35}
\definecolor{darkgoldenrod}{rgb}{0.72, 0.53, 0.04}
\definecolor{gold(metallic)}{rgb}{0.83, 0.69, 0.22}
\DeclareFontFamily{U}{wncy}{}
\DeclareFontShape{U}{wncy}{m}{n}{<->wncyr10}{}
\DeclareSymbolFont{mcy}{U}{wncy}{m}{n}
\DeclareMathSymbol{\Sh}{\mathord}{mcy}{"58}
\newtheorem{theorem}{Theorem}[section]
\newtheorem*{ass*}{Assumption}
\newtheorem{definition}[theorem]{Definition}
\newtheorem{proposition}[theorem]{Proposition}
\newtheorem{lthm}{Theorem}
\newcommand{\cB}{\mathcal{B}}
\newcommand{\cF}{\mathcal{F}}
\newcommand{\cD}{\mathcal{D}}
\newcommand{\cM}{\mathcal{M}}
\newcommand{\cE}{\mathcal{E}}
\newcommand{\Z}{\mathbb{Z}}
\newcommand{\Q}{\mathbb{Q}}
\newcommand{\F}{\mathbb{F}}
\newcommand{\cC}{\mathcal{C}}
\newcommand{\cS}{\mathcal{S}}
\newcommand{\cMl}{\mathcal{M}(\op{GL}_2(\F_\ell))}
\newcommand{\cMX}{\mathcal{M}(\op{GL}_2(\F_\ell); X)}
\newcommand{\cMY}[1]{\mathcal{M}(\op{GL}_2(\F_\ell); #1)}
\newcommand{\cFX}{\mathcal{F}(\op{GL}_2(\F_\ell); X)}
\newcommand{\cFY}[1]{\mathcal{F}(\op{GL}_2(\F_\ell); #1)}
\newcommand{\op}[1]{\operatorname{#1}}
\numberwithin{equation}{section}
\begin{document}

\title[Number of number fields with Galois group $\op{GL}_2(\F_\ell)$]{Lower bounds for the number of number fields with Galois group $\op{GL}_2(\F_\ell)$}

\author[A.~Ray]{Anwesh Ray}
\address[Ray]{Chennai Mathematical Institute, H1, SIPCOT IT Park, Kelambakkam, Siruseri, Tamil Nadu 603103, India}
\email{ar2222@cornell.edu}

\keywords{number field counting, sieve methods, Galois images associated to elliptic curves}
\subjclass[2020]{11G05, 11R45}

\maketitle

\begin{abstract}
  Let $\ell\geq 5$ be a prime number and $\F_\ell$ denote the finite field with $\ell$ elements. We show that the number of Galois extensions of the rationals with Galois group isomorphic to $\op{GL}_2(\F_\ell)$ and absolute discriminant bounded above by $X$ is asymptotically at least $\frac{X^{\frac{\ell}{12(\ell-1)\#\op{GL}_2(\F_\ell)}}}{\log X}$. We also obtain a similar result for the number of surjective homomorphisms $\rho:\op{Gal}(\bar{\Q}/\Q)\rightarrow \op{GL}_2(\F_\ell)$ ordered by the prime to $\ell$ part of the Artin conductor of $\rho$. 
\end{abstract}

\section{Introduction}
\par Let $G$ be a transitive subgroup of $S_n$. Given a number field $K$ with $[K:\Q]=n$, let $\tilde{K}$ denote its Galois closure over $\Q$. We view $\op{Gal}(\tilde{K}/\Q)$ as a transitive permutation subgroup of $S_n$, where the permutation action is defined by its action on the $n$ embeddings of $K$ into $\mathbb{C}$. For any real number $X>0$, let $N(G; X)$ be the number of number fields $K$ of degree $n$, with absolute discriminant at most $X$, such that $\op{Gal}(\tilde{K}/\Q)$ is isomorphic to $G$ as a permutation subgroup of $S_n$. In \cite{malle2002distribution}, Malle made a precise conjecture about the asymptotic growth of $N(G;X)$ as a function of $X$. The conjecture posits that
\[N(G;X)\sim X^{\frac{1}{a(G)}} (\log X)^{b(G)-1},\] where $a(G)$ and $b(G)$ are numbers which depend on the permutation representation of $G$. Malle's conjecture has been proven for the abelian groups \cite{maki1985density}, $S_n$ for $n\leq 5$ \cite{davenport1971density, bhargava2005density,bhargava2008density, bhargava2010density,belabas2010discriminants}, the dihedral group $D_4$ \cite{cohen2002enumerating}, and various groups of the form $A\times S_n$, where $A$ is a finite abelian group and $n\leq 5$, cf. \cite{wang2021malle}. This list is not exhaustive. Motivated by such developments, various asymptotic lower bounds for the groups $S_n$ and $A_n$ have been proven, cf. \cite{ellenberg2006number, pierce2020effective}. 
\subsection{Main result} A large class of finite groups of interest are the groups of the form $G(\F_q)$, where $G$ is an algebraic group over the finite field $\F_q$. These are groups that arise naturally in the study of Galois representations associated with abelian varieties, motives and automorphic forms. Let $\ell\geq 5$ be an odd prime number and let $\F_\ell$ be the finite field with $\ell$ elements. One of the simplest examples to consider are the groups $G=\op{GL}_2(\F_\ell)$. We view $\op{GL}_2(\F_\ell)$ as a subgroup of $S_{\#\op{GL}_2(\F_\ell)}$, taken with respect to the regular representation, and prove asymptotic lower bounds for the number of Galois number fields $L/\Q$ with Galois group $\op{Gal}(L/\Q)$ isomorphic to $\op{GL}_2(\F_\ell)$, and bounded absolute discriminant.
\par Given a number field $L$, set $\Delta_L$ to denote the discriminant of $L/\Q$. Set $\cM(\op{GL}_2(\F_\ell))$ to denote the set of all pairs $(L, \rho)$, where
\begin{itemize}
    \item $L/\Q$ is a Galois extension with Galois group $\op{Gal}(L/\Q)$ isomorphic to $\op{GL}_2(\F_\ell)$;
    \item $\rho:\op{Gal}(L/\Q)\xrightarrow{\sim} \op{GL}_2(\F_\ell)$ is an isomorphism which arises from the $\ell$-torsion of an elliptic curve $E$.
\end{itemize}
We take $N_\rho$ to denote the prime to $\ell$ part of the Artin conductor of $\rho:\op{G}_\Q\rightarrow \op{GL}_2(\F_\ell)$. This quantity is also known as the \emph{optimal level}, with reference to Serre's conjecture, cf. \cite[section 1.3]{ribet1999lectures}. Note that if two isomorphisms $\rho, \rho':\op{Gal}(L/\Q)\xrightarrow{\sim} \op{GL}_2(\F_\ell)$ differ by an inner automorphism of $\op{GL}_2(\F_\ell)$, then, $N_\rho=N_{\rho'}$. Taking $X\in \mathbb{R}_{>0}$, one is interested in obtaining asymptotic lower bounds for the cardinalities of the following sets
\[\begin{split} & \cMX:=\{(L, \rho)\in \cMl\mid N_\rho\leq X\},\\
& \cFX:=\{L\mid L/\Q\text{ is Galois with }\op{Gal}(L/\Q)\simeq \op{GL}_2(\F_\ell)\text{, and } |\Delta_L|\leq X\}.
\end{split}\]
We note that $\# \cF(\op{GL}_2(\F_\ell); X)$ is simply $N(\op{GL}_2(\F_\ell);X)$ with respect to the natural inclusion of $\op{GL}_2(\F_\ell)$ into $S_{\# \op{GL}_2(\F_\ell)}$. Given a set $S$, let $\#S\in \Z_{\geq 0}\cup \{\infty\}$ denote its cardinality. Given functions $f(X)$ and $g(X)$ taking on non-negative values, we set $f(X)\gg g(X)$ (resp. $f(X)=O(g(X))$) to mean that there is a constant $C>0$, independent of $X$, such that $f(X)\geq C g(X)$ (resp. $f(X)\leq C g(X)$). The main result is stated below.
\begin{lthm}\label{main thm}
    Let $\ell\geq 5$ be a prime number. Then,
    \[\begin{split}
    &\# \cMX\gg \frac{X^{\frac{1}{12}}}{\log X};\\
    & N(\op{GL}_2(\F_\ell);X)=\# \cFX\gg\frac{X^{\frac{\ell}{12(\ell-1)\#\op{GL}_2(\F_\ell)}}}{\log X},
    \end{split}\]
    where the implied constant depends only on $\ell$. 
\end{lthm}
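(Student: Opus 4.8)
The plan is to realize $\op{GL}_2(\F_\ell)$-extensions geometrically, as the fixed fields $L_E$ of the kernels of the mod-$\ell$ Galois representations $\barrho_{E,\ell}\colon\op{Gal}(\overline{\Q}/\Q)\to\op{GL}_2(\F_\ell)$ attached to elliptic curves $E/\Q$, and to produce enough of them --- with controlled prime-to-$\ell$ Artin conductor and controlled field discriminant --- from a single well-chosen one-parameter family specialized along the primes. Both displayed bounds will come from the same family; the passage from the exponent $\tfrac1{12}$ to the exponent $\tfrac{\ell}{12(\ell-1)\#\op{GL}_2(\F_\ell)}$ is forced by the conductor--discriminant formula at a tamely, unipotently ramified prime, and the factor $1/\log X$ in both estimates reflects that we are counting prime values of the parameter.

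Fix, depending only on $\ell$, polynomials $p(t),q(t)\in\Z[t]$ of degrees $4$ and $6$ such that $E_t\colon y^2=x^3+p(t)x+q(t)$ is a non-isotrivial family whose discriminant $\Delta(E_t)=-16\bigl(4p(t)^3+27q(t)^2\bigr)$ has degree exactly $12$ in $t$, with $t\,\|\,\Delta(E_t)$ (i.e.\ $t\mid\Delta(E_t)$ but $t^2\nmid\Delta(E_t)$ in $\Z[t]$), with $p(t)$ and $q(t)$ coprime in $\Q[t]$, and whose geometric monodromy group is all of $\op{SL}_2(\F_\ell)$. Write $\Delta(E_t)=-16\,t\,g(t)$ with $g(0)\ne0$, $\deg g=11$, and let $S$ be the finite set of primes dividing $6\ell\cdot\op{Res}(p,q)\cdot p(0)\cdot g(0)$. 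A resultant argument shows that for a prime $q_0\notin S$ dividing $\Delta(E_p)$ one has $q_0\nmid p(p)q(p)$, so the model $E_p$ is minimal at $q_0$ and has multiplicative reduction there; in particular $E_p$ is semistable away from the fixed set $S$. Moreover for a prime $p\notin S$ we get $v_p(\Delta(E_p))=v_p(-16\,p\,g(p))=1$, so $E_p$ has split or nonsplit multiplicative reduction at $p$ and $\barrho_{E_p,\ell}$ is ramified at $p$, with image of inertia cyclic of order $\ell$ generated by a transvection. Finally, by the Hilbert irreducibility theorem the set of $t_0\in\Q$ for which $\barrho_{E_{t_0},\ell}$ fails to be surjective onto $\op{GL}_2(\F_\ell)$ (here the cyclotomic determinant supplies $\F_\ell^\times$ on top of $\op{SL}_2(\F_\ell)$) is thin, hence contains $O(Y^{1/2})$ primes up to $Y$; discarding these and the primes in $S$, we attach to each remaining prime $p$ a pair $(L_p,\rho_p)\in\cMl$ with $\op{Gal}(L_p/\Q)\cong\op{GL}_2(\F_\ell)$.

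The estimates are then routine. Since $\barrho_{E_p,\ell}$ ramifies away from $\ell$ only inside the bad primes of $E_p$, and at a multiplicative prime $q_0\notin S$ the conductor exponent is at most $1$, one has $N_{\rho_p}\mid\op{rad}(\Delta(E_p))\cdot\prod_{q_0\in S}q_0^{2}$, whence $N_{\rho_p}\ll_\ell p^{12}$ because $|\Delta(E_p)|\ll_\ell p^{12}$. For the field discriminant one applies $|\Delta_{L_p}|=\prod_{\chi\in\op{Irr}(\op{GL}_2(\F_\ell))}\mathfrak{f}(\chi)^{\chi(1)}$: at a multiplicative prime $q_0\notin S$ the image $I$ of inertia in $\op{GL}_2(\F_\ell)$ is unipotent, hence tamely ramified and cyclic of order $e_{q_0}\in\{1,\ell\}$, and summing the tame conductor exponents $\chi(1)-\dim\chi^{I}$ against $\chi(1)$, together with $\sum_\chi\chi(1)\dim\chi^{I}=\#\op{GL}_2(\F_\ell)/e_{q_0}$, gives $v_{q_0}(\Delta_{L_p})=\#\op{GL}_2(\F_\ell)\bigl(1-1/e_{q_0}\bigr)\le\#\op{GL}_2(\F_\ell)\cdot\tfrac{\ell-1}{\ell}$, while the contributions of the primes in $S\cup\{\ell\}$ are bounded independently of $p$. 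Hence $|\Delta_{L_p}|\le C_\ell\cdot\op{rad}(\Delta(E_p))^{\#\op{GL}_2(\F_\ell)(\ell-1)/\ell}\ll_\ell p^{\,12\#\op{GL}_2(\F_\ell)(\ell-1)/\ell}$. Since $v_p(N_{\rho_p})=1$ at $q_0=p$, the exponents of $p$ in $N_{\rho_p}$ and in $|\Delta_{L_p}|$ stand in the ratio $\tfrac{(\ell-1)\#\op{GL}_2(\F_\ell)}{\ell}$, which is exactly the factor relating the two exponents in the theorem.

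It remains to bound the multiplicity of $p\mapsto(L_p,\rho_p)$. If $p\ne p'$ and $L_p=L_{p'}$, then $p$ ramifies in $L_{p'}$, so $p$ is a bad prime of $E_{p'}$, so $p\mid\Delta(E_{p'})=-16\,p'g(p')$, hence (as $p\ne p'$ and $p\notin S$) $p\mid g(p')$; symmetrically $p'\mid g(p)$. For $p<p'$ this forces $p'$ to be a prime factor of $g(p)$ exceeding $p$, and since $|g(p)|\ll p^{11}$ there are at most $11$ such $p'$ once $p$ is large. Thus the collision graph on the surviving primes has bounded valence, so the number of distinct pairs $(L_p,\rho_p)$ with $p\le Y$ is still $\gg_\ell Y/\log Y$; taking $Y\asymp X^{1/12}$ in the conductor bound gives $\#\cMX\gg_\ell X^{1/12}/\log X$, and taking $Y\asymp X^{\ell/(12(\ell-1)\#\op{GL}_2(\F_\ell))}$ in the discriminant bound gives $\#\cFX\gg_\ell X^{\ell/(12(\ell-1)\#\op{GL}_2(\F_\ell))}/\log X$, as asserted. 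The step I expect to require the most care is the construction of the family: one must choose $p(t),q(t)$ so that large geometric monodromy holds \emph{and} the specializations stay semistable away from a fixed finite set of primes --- it is the latter property that pins the conductor at size $p^{12}$ and hence produces the precise exponents --- after which Hilbert irreducibility over the thin locus, the conductor--discriminant bookkeeping, and the distinctness count are all comparatively soft.
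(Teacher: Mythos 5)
Your approach is genuinely different from the paper's. The paper considers the full family of elliptic curves $E_{/\Q}$ ordered by height, uses Duke's theorem together with Sadek's count of semistable curves to show a positive proportion lie in $\cS_\ell$, and then bounds the fibers of $E\mapsto L_\ell(E)$ via a variant of Gallagher's large sieve (its Proposition \ref{main prop} shows $\#\cD_L(Y)\ll Y^{9/2}\log Y$ uniformly in $L$); the exponent $\frac{1}{12}$ arises as $\frac{1}{6}\bigl(5-\frac{9}{2}\bigr)$. You instead specialize a single carefully chosen one-parameter Weierstrass family $E_t$ at primes $p$, invoke quantitative Hilbert irreducibility to discard the non-surjective specializations, control the local ramification explicitly, and bound collisions by a direct divisibility argument; your $\frac{1}{12}$ arises as the reciprocal of $\deg_t\Delta(E_t)$. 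Both routes happen to yield the same power of $X$, and your conductor--discriminant bookkeeping at tame multiplicative primes (yielding $v_{q_0}(\Delta_{L_p})\leq\#\op{GL}_2(\F_\ell)(\ell-1)/\ell$) reproduces exactly what the paper extracts from the formula of Cal\`{i}--Kraus in Proposition \ref{calikraus}(1), and your collision argument is sound for primes large enough that $|g(p)|\ll p^{11}$ forces $\leq 11$ larger prime factors.

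The genuine gap is the very first step: you assert, but do not construct or cite, a family $E_t\colon y^2=x^3+p(t)x+q(t)$ with $\deg p=4$, $\deg q=6$, $\deg_t\Delta=12$, $t\,\|\,\Delta(E_t)$, $\gcd(p,q)=1$ in $\Q[t]$, \emph{and} mod-$\ell$ geometric monodromy equal to $\op{SL}_2(\F_\ell)$ for every prime $\ell\geq 5$. The last condition is the one that needs a real argument. The condition $t\,\|\,\Delta$ does give you a transvection in the local monodromy at $t=0$, and a subgroup of $\op{SL}_2(\F_\ell)$ containing a transvection and acting irreducibly is all of $\op{SL}_2(\F_\ell)$; but irreducibility of the geometric monodromy is precisely the nonfactorization of the $j$-map through $X_0(\ell)\to X(1)$, which is not automatic for a twelve-to-one cover of the $j$-line and must be checked (or ruled out uniformly) --- particularly for the small primes $\ell=5,7,13$ where $X_0(\ell)$ has genus zero. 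Without exhibiting explicit $p,q$ and verifying this, or citing a theorem that produces such a family, the proof does not close. The paper sidesteps this entirely by using the full family of elliptic curves, where surjectivity for almost all members is Duke's theorem; if you want to salvage your route, you would either need to produce and verify a concrete family, or replace the monodromy hypothesis by the weaker Hilbert-irreducibility statement for the full Weierstrass family in two parameters (which is essentially what the paper's use of Duke accomplishes), at the cost of reintroducing a sieve-type fiber bound to control collisions in a two-dimensional parameter space.
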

Lipnowski \cite{lipnowski2021bhargava} proposed a heuristic for counting $\op{GL}_2(\F_\ell)$-extensions of $\Q$, that satisfy the same properties as the extensions that arise from modular Galois representations. In \emph{loc. cit.}, a homomorphism $\rho: \op{Gal}(\bar{\Q}/\Q)\rightarrow \op{GL}_2(\F_\ell)$ is said to be \emph{locally modular}, if it satisfies the same local conditions as that of a Galois representation associated with a Hecke eigencuspform of weight $2$. This conjecture is based on Bhargava's local-global heuristics for counting $G$-number fields \cite{bhargava2007mass}, and predicts that if $G(X)$ is the number of locally modular Galois representations with squarefree conductor $N\in [X, 2X]$, then, 
\[G(X)\sim X\log X \prod_v \left(1-3v^{-2}+2v^{-3}\right), \]where $v$ runs over all prime numbers. The representations constructed in this article arise from elliptic curves, and thus are locally modular. 
\par For any finite group $G$ with $n$ elements and inclusion $G\hookrightarrow S_n$ with respect to the regular representation, the value of $a(G)$ is $\left(\frac{p-1}{p}\right)\# G$, where $p$ is the smallest prime divisor of $G$ (cf. \cite[p. 318]{malle2002distribution}). Therefore, the power of $X$ in the predicted asymptotic for $N(\op{GL}_2(\F_\ell);X)$ is $\frac{2}{\#\op{GL}_2(\F_\ell)}$. In our main result for $N(\op{GL}_2(\F_\ell);X)$, the power of $X$ obtained is thus $\frac{1}{24}\left(\frac{\ell}{\ell-1}\right)$ times the conjectured value. In particular the conjecture predicts that $\frac{1}{a(\op{GL}_2(\F_\ell))}\sim \frac{2}{\ell^4}$ (as $\ell\rightarrow \infty$), and the power of $X$ in our bound is $\sim \frac{1}{12\ell^4}$. Thus, the correct power of $\ell$ has been achieved, and we are off by a factor of $24$.
\subsection{Method of proof} Let us briefly describe the methodology behind the proof of Theorem \ref{main thm}. Given an elliptic curve $E_{/\Q}$ and a prime number $\ell$, one considers the associated Galois representation 
\[\bar{\rho}_{E,\ell}:\op{Gal}(\bar{\Q}/\Q)\rightarrow \op{GL}_2(\F_\ell)\] on the $\ell$-torsion subgroup of $E(\bar{\Q})$. The $\ell$ division field $L_\ell(E)$ is the field extension of $\Q$, which is fixed by the kernel of $\bar{\rho}_{E, \ell}$. The number field $L_\ell(E)$ is Galois over $\Q$, and the Galois group $\op{Gal}(L_\ell(E)/\Q)$ is isomorphic to $\op{GL}_2(\F_\ell)$ precisely when $\bar{\rho}_{E, \ell}$ is surjective. We consider the family of all elliptic curves over $\Q$, ordered according to \emph{height}, and relate it to the family of $\op{GL}_2(\F_\ell)$-extensions of $\Q$, ordered by discriminant (or Artin conductor). It follows from a theorem of Duke \cite{duke1997elliptic} that for almost all elliptic curves $E_{/\Q}$, the residual representation $\bar{\rho}_{E, \ell}$ is surjective. Given a Galois extension $L/\Q$ with Galois group isomorphic to $\op{GL}_2(\F_\ell)$, we obtain an asymptotic upper bound for the number of elliptic curves $E_{/\Q}$ with bounded height, for which $L_\ell(E)=L$, cf. Proposition \ref{main prop}. This involves an application of a result of the author and Weston \cite[Proposition 3.2]{ray2023diophantine}. In fact, the argument can be traced back to a result of Duke \cite[Theorem 2]{duke1997elliptic}. Duke's argument involves a variant of Gallagher's large sieve. We then prove Theorem \ref{main thm} via a counting argument. 

\subsection{Outlook} Galois representations arise from motives and automorphic forms. The Galois representations that arise from elliptic curves are the most natural examples to consider. We expect that this methodology has the potential to lead to interesting generalizations.

\subsection{Related work} The author was informed upon the public release of an earlier version of this manuscript that Vittoria Cristante is investigating related questions in her thesis. Notably, she has concurrently obtained intriguing results connecting Malle's conjecture to the analysis of Galois representations associated with elliptic curves.

\subsection*{Acknowledgment} When the project was started, the author was a Simons postdoctoral fellow at the Centre de recherches mathematiques in Montreal, Canada. At this time, the author's research is supported by the CRM Simons postdoctoral fellowship. He would like to thank Chris Wuthrich for his comments in response to a question he posted on MathOverflow. He would like to thank Robert Lemke-Oliver for pointing out a calculation error in the previous version. Lastly, he would also like to thank the anonymous referee for the excellent report.

\section{Elliptic curves and division fields}\label{s 2}
\subsection{Parametrizing $\op{GL}_2(\F_\ell)$-extensions of $\Q$} \par Let $\ell$ be an odd prime number. In this section, we recall some well known results about the division field associated with the $\ell$-torsion of an elliptic curve $E_{/\Q}$. Throughout, $\bar{\Q}$ will be a fixed algebraic closure of $\Q$, the absolute Galois group of $\Q$ is denoted $\op{G}_\Q:=\op{Gal}(\bar{\Q}/\Q)$. The field with $\ell$ elements is denoted by $\F_\ell$. Given a finite group $G$ and a homomorphism $\rho:\op{G}_\Q\rightarrow G$, the field $\Q(\rho)$ denotes $\bar{\Q}^{\op{ker}\rho}$, and is referred to as the field \emph{cut out by $\rho$}. We note that $\Q(\rho)/\Q$ is a Galois extension, and that the Galois group $G_\rho:=\op{Gal}(\Q(\rho)/\Q)$ injects into $G$ via $\rho$, and we may identify $G_\rho$ with the image of $\rho$. In particular, if $\rho$ is surjective, then, $\Q(\rho)$ is a Galois extension of $\Q$ with Galois group $G_\rho$ isomorphic to $G$. We shall refer to such an extension as a $G$-extension of $\Q$.
\par Let $E_{/\Q}$ be an elliptic curve and $\ell$ be a prime number. Denote by $E[\ell]$ the $\ell$-torsion subgroup of $E(\bar{\Q})$, and note that $E[\ell]$ is a $2$-dimensional $\F_\ell$-vector space. Choosing an $\F_\ell$-basis for $E[\ell]$, we identify the automorphism group of $E[\ell]$ with $\op{GL}_2(\F_\ell)$. Set $\bar{\rho}_{E, \ell}:\op{G}_\Q\rightarrow \op{GL}_2(\F_\ell)$ to be the Galois representation on $E[\ell]$ and denote by $L_\ell(E)$ the subfield of $\bar{\Q}$ fixed by the kernel of $\bar{\rho}_{E, \ell}$. In other words, $L_\ell(E)$ is the field cut out by $\bar{\rho}_{E, \ell}$. Set $G_\ell(E):=\op{Gal}(L_\ell(E)/\Q)$; note that if $\bar{\rho}_{E,\ell}$ is surjective, then, $G_\ell(E)$ is isomorphic to $\op{GL}_2(\F_\ell)$.
\subsection{Elliptic curves ordered by height}\par An elliptic curve $E_{/\Q}$ admits a unique Weierstrass equation $y^2=x^3+Ax+B$, where $(A,B)\in \Z^2$ is such that $p^4\nmid A$ and $p^6\nmid B$ for all primes $p$ (cf. \cite{duke1997elliptic}). The Weierstrass equation $y^2=x^3+Ax+B$ is minimal at every prime, and the naive height of $E$ is defined as follows
\[H(E):=\op{max}\{|A|^3, B^2\}.\] Let $\cC$ be the set of isomorphism classes of elliptic curves $E_{/\Q}$ and let $\cC(X)$ be the set of elliptic curves with $H(E)\leq X^6$. We identify $\cC(X)$ with the set of tuples $(A,B)\in \Z^2$ such that $p^4\nmid A$ and $p^6\nmid B$ for all primes $p$, and $|A|\leq X^2$ and $|B|\leq X^3$. It is easy to see that
\begin{equation}\label{CS asymptotic}\# \cC(X)= \frac{4}{\zeta(10)}X^5+O(X^3), \end{equation}cf. \cite[Lemma 4.3]{brumer1992average}. Let $\cS$ be a set of isomorphism classes of elliptic curves $E_{/\Q}$. For $X>0$, set $\cS(X):=\cS\cap \cC(X)$, and let $\delta(\cS)$ to denote the limit
\[\delta(\cS)=\lim_{X\rightarrow \infty} \left(\frac{\# \cS(X)}{\#\cC(X)}\right)=\frac{\zeta(10)}{4}\lim_{X\rightarrow \infty} \left(\frac{\# \cS(X)}{X^5}\right),\] provided it exists. Denote by $\Delta(E):=-16(4A^3+27B^2)$, and note that if $H(E)\leq X^6$, then, $|\Delta(E)|\leq 496 X^6$. Let $N_E$ denote the conductor of $E$. It follows from Ogg's formula that $N_E$ divides $\Delta_E$, cf. \cite[p.389]{silverman1994advanced}. The elliptic curve $E$ is modular, and the associated modular form $f$ has level $N_E$. Set $N_E':=N_{\bar{\rho}_{E, \ell}}$; results of Carayol (cf. \cite{ribet1999lectures}) imply that $N_E'$ divides $N_E$, and hence, $N_E'\leq 496X^6$.

\begin{definition}We shall denote by $\cS_\ell(X)$ the subset of $\cC(X)$ for which $E$ 
\begin{enumerate}
    \item has semistable reduction at all primes $p$, except possibly at $p=2,3$,
    \item and $\bar{\rho}_{E, \ell}$ is surjective.
\end{enumerate} For $E\in \cS_\ell(X)$, let $\gamma_\ell(E)$ be the pair $(L_\ell(E), \bar{\rho}_{E, \ell})$.
\end{definition}

\begin{proposition}\label{M prop} Set $Y_1(X):=\left(\frac{X}{496}\right)^{\frac{1}{6}}$. For $E\in \cS_\ell\left(Y_1(X)\right)$, the pair $\gamma_\ell(E)$ belongs to $\cMY{X}$. 
\end{proposition}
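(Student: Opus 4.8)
The plan is to unwind the two conditions defining membership in $\cMY{X}$ — namely that $\gamma_\ell(E)$ lies in $\cMl$ and that its conductor invariant is at most $X$ — and to feed them the facts assembled in the paragraphs immediately preceding the statement.

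First I would check that $\gamma_\ell(E) = (L_\ell(E), \bar{\rho}_{E,\ell})$ is an element of $\cMl$ at all. Since $E \in \cS_\ell(Y_1(X))$, the representation $\bar{\rho}_{E,\ell}\colon \op{G}_\Q \to \op{GL}_2(\F_\ell)$ is surjective by definition of $\cS_\ell$; hence, as recalled in Section \ref{s 2}, $L_\ell(E) = \Q(\bar{\rho}_{E,\ell})$ is Galois over $\Q$ and the induced map $\op{Gal}(L_\ell(E)/\Q) = \op{G}_\Q/\ker\bar{\rho}_{E,\ell} \xrightarrow{\sim} \op{GL}_2(\F_\ell)$ is an isomorphism. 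Writing $\bar{\rho}_{E,\ell}$ also for this descended isomorphism, the pair $(L_\ell(E), \bar{\rho}_{E,\ell})$ is by construction an element of $\cMl$. (The semistability condition built into $\cS_\ell$ plays no role here; it is needed only for the sieve argument behind Proposition \ref{main prop}.)

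Next I would bound $N_{\bar{\rho}_{E,\ell}}$. From $E \in \cS_\ell(Y_1(X)) \subseteq \cC(Y_1(X))$ the minimal Weierstrass model $y^2 = x^3 + Ax + B$ satisfies $|A| \le Y_1(X)^2$ and $|B| \le Y_1(X)^3$, so $|\Delta(E)| = 16\,|4A^3 + 27B^2| \le 16\bigl(4|A|^3 + 27|B|^2\bigr) \le 496\,Y_1(X)^6 = 496 \cdot (X/496) = X$. Now combine the two divisibilities recalled before the statement: Ogg's formula gives $N_E \mid \Delta(E)$, and Carayol's results give that the prime-to-$\ell$ part of the Artin conductor, $N_{\bar{\rho}_{E,\ell}} = N_E'$, divides $N_E$. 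Hence $N_{\bar{\rho}_{E,\ell}} \mid \Delta(E)$, and in particular $N_{\bar{\rho}_{E,\ell}} \le |\Delta(E)| \le X$, so $\gamma_\ell(E)$ lies in $\{(L,\rho)\in\cMl : N_\rho \le X\} = \cMY{X}$.

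I do not expect a genuine obstacle: the statement is essentially a repackaging of the height--discriminant inequality, Ogg's formula, and Carayol's bound on the optimal level. The only points that require a moment's care are the numerical constant $496$ hidden in the definition of $Y_1(X)$, chosen precisely so that the discriminant bound comes out to exactly $X$ rather than a constant multiple of it, and the fact that it is the prime-to-$\ell$ part $N_\rho$ of the conductor that is being controlled rather than the full Artin conductor of $\bar{\rho}_{E,\ell}$ at $\ell$ — which is exactly how $N_\rho$ was defined, so no further input is needed.
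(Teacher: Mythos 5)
Your proof is correct and follows essentially the same route as the paper: bound $|\Delta(E)|$ by $496\,Y_1(X)^6 = X$ from the height constraint, then chain Ogg's formula ($N_E \mid \Delta(E)$) with Carayol's bound on the optimal level ($N_E' \mid N_E$) to get $N_{\bar\rho_{E,\ell}} \le X$. The paper's proof is terser — it simply writes $N_E' \le |\Delta_E| \le 496\,Y_1(X)^6 = X$ and refers to the discussion preceding the proposition — but you have spelled out exactly the ingredients being invoked there, including the otherwise implicit check that $\gamma_\ell(E) \in \cMl$.
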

\begin{proof}
    By the discussion above, $N_E'\leq |\Delta_E|\leq 496Y_1(X)^6=X$, and thus, the result follows. 
\end{proof}

Set $\Delta_E':=\Delta_{L_\ell(E)}$; we study the relationship between $\Delta(E)$ and $\Delta_E'$. Write $|\Delta_E'|$ to be a product of prime powers $\prod_p p^{m_\ell(p)}$. At each prime number $p$, let $v_p$ be the valuation normalized by setting $v_p(p)=1$.

\begin{proposition}\label{calikraus}
    Let $E_{/\Q}$ be an elliptic curve and $\ell$ be a prime number. The following assertions hold.
    \begin{enumerate}
        \item\label{calikraus1} For a prime number $p\neq \ell$ at which $E$ has semistable reduction, let $m_\ell(p):=\op{ord}_p(\Delta_E')$ defined above. Then, we have that $m_\ell(p)\leq \left(\frac{\ell-1}{\ell}\right)\#\op{GL}_2(\F_\ell)$. 
        \item\label{calikraus1b} Suppose on the other hand that $E$ has additive reduction at $p$, and $p\in \{2, 3\}$, then, $m_\ell(p)\leq (3^2-1)(3^2-3)\times 68=3264$.
        \item\label{calikraus2} We have that $m_\ell(\ell)\leq C_\ell$, where $C_\ell>0$ is a constant which depends only on $\ell$.
    \end{enumerate}
\end{proposition}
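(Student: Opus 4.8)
The plan is to control the exponent $m_\ell(p) = \operatorname{ord}_p(\Delta_{L_\ell(E)})$ at each prime $p$ by studying the ramification in the local extension $L_\ell(E)\otimes_\Q \Q_p$, and then using the conductor-discriminant formula. Recall that for a Galois extension $L/\Q$ with group $G$, the discriminant exponent at $p$ is $\operatorname{ord}_p(\Delta_L) = \sum_{\chi} (\dim\chi)\, f(\chi,p)$ summed over irreducible characters $\chi$ of $G$, where $f(\chi,p)$ is the Artin conductor exponent; equivalently, if $D$ and $I = I_0 \supseteq I_1 \supseteq \cdots$ are the decomposition and ramification subgroups at a prime of $L$ above $p$, then $\operatorname{ord}_p(\Delta_L) = [G:D]\cdot \sum_{i\geq 0} \frac{|I_i|-1}{|I_0|}\cdot |I_0| / |I_0| \cdot \dots$ — more cleanly, $\operatorname{ord}_p(\Delta_L) = \sum_{i\geq 0}\bigl([G:D] \cdot \tfrac{|I_i| - 1}{1}\bigr)$ is not quite right either; the correct statement I will use is that the different exponent $d_p$ of $L/\Q$ at $p$ satisfies $d_p = \sum_{i\geq 0}(|I_i|-1)$ and $\operatorname{ord}_p(\Delta_L) = \frac{[L:\Q]}{|I_0|}\, d_p$ (the number of primes above $p$ is $[L:\Q]/(|D|)$ and each contributes $f_p \cdot d_p$, with $e_p f_p = |D|$, $e_p = |I_0|$). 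So the whole problem reduces to bounding the wild part of the ramification filtration $\{I_i\}$.

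For part \eqref{calikraus1}: if $p\neq\ell$ and $E$ has semistable reduction at $p$, then $\bar\rho_{E,\ell}$ restricted to $G_{\Q_p}$ is tamely ramified — indeed in the good reduction case $I_p$ acts trivially (the representation is unramified), and in the multiplicative case $I_p$ acts through a single unipotent element of order dividing $\ell$, which since $p\neq\ell$ forces the image of the wild inertia $I_1$ to be trivial. Hence $I_1 = 1$ in $G = \operatorname{GL}_2(\F_\ell)$, so the different exponent is $d_p = |I_0| - 1 \leq |I_0|$, and then $m_\ell(p) = \frac{\#\operatorname{GL}_2(\F_\ell)}{|I_0|}(|I_0|-1) \leq \#\operatorname{GL}_2(\F_\ell)\cdot\frac{|I_0|-1}{|I_0|}$. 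Since $|I_0|$ is the order of a cyclic (tame inertia) subgroup of $\operatorname{GL}_2(\F_\ell)$ and the largest such order relevant here is $\ell-1$ — or more safely, $\frac{|I_0|-1}{|I_0|}$ is increasing in $|I_0|$ and one checks the maximal cyclic order is $\ell^2-1$, but the bound claimed uses $\frac{\ell-1}{\ell}$, so I will need to argue that in the tame semistable case $|I_0|$ divides $\ell-1$ (the unipotent/Borel structure of $\bar\rho_{E,\ell}|_{I_p}$ in the multiplicative reduction case gives tame inertia of order dividing $\ell-1$, coming from the fundamental character, while good reduction gives $|I_0|=1$). This yields $m_\ell(p) \leq \frac{\ell-1}{\ell}\#\operatorname{GL}_2(\F_\ell)$.

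For part \eqref{calikraus1b}: at $p\in\{2,3\}$ with additive reduction there is wild ramification, but the point is crude uniform control. The extension $L_\ell(E)\otimes\Q_p$ is a subextension of a field cut out by $\operatorname{GL}_2(\F_\ell)$; however, to get a bound independent of $\ell$ I will instead use that additive reduction becomes semistable over a field of bounded degree (by the theory of the $\ell$-adic, or rather $\ell'$-, Néron model: potentially good or potentially multiplicative reduction is achieved over an extension of degree dividing $24$ when $p\geq 5$, and one uses the explicit tables of Kraus for $p=2,3$), so the image of inertia $I_p$ in $\operatorname{Aut}(E[\ell])$ factors through $\operatorname{GL}_2(\F_3)$-type control — more precisely the $p$-part of the ramification is governed by the action on $E[3]$ or a bounded piece, giving $|I_0|\leq (3^2-1)(3^2-3) = 48$ and then one estimates the full different using conductor bounds from Kraus/Carayol, landing at the stated $3264 = 48\cdot 68$. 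The main obstacle, and where I would spend the most care, is precisely this step: extracting a clean $\ell$-independent bound at $2$ and $3$ requires either invoking explicit local formulas (Kraus, "Sur le défaut de semi-stabilité...") or an argument that the wild inertia image is constrained to a subgroup coming from the $3$-torsion. For part \eqref{calikraus2}: at $p=\ell$ the representation $\bar\rho_{E,\ell}|_{G_{\Q_\ell}}$ is far from tame (it can be as complicated as allowed by Fontaine–Laffaille / Serre weight theory), but since we only need \emph{some} constant $C_\ell$ depending on $\ell$, it suffices to note that $L_\ell(E)\otimes\Q_\ell$ is a Galois extension of $\Q_\ell$ with group a subgroup of $\operatorname{GL}_2(\F_\ell)$, and the ramification filtration of any such fixed extension has bounded depth by the Hasse–Arf-type bound on the conductor exponent of a mod-$\ell$ representation of $G_{\Q_\ell}$ (the conductor exponent at $\ell$ of $\bar\rho_{E,\ell}$ is bounded in terms of $\ell$ only — e.g. $\leq 2 + 2v_\ell(\ell) + \text{something}$, bounded purely by $\dim = 2$ and the ramification of $\Q_\ell(\mu_{\ell^\infty})$), so $m_\ell(\ell) = \frac{\#\operatorname{GL}_2(\F_\ell)}{|I_0|}\, d_\ell \leq \#\operatorname{GL}_2(\F_\ell)\cdot d_\ell =: C_\ell$, and one takes $C_\ell$ to be any valid explicit bound for the different exponent of a $\operatorname{GL}_2(\F_\ell)$-extension of $\Q_\ell$.
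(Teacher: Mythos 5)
Your overall strategy — bound $m_\ell(p)$ via $\operatorname{ord}_p(\Delta_L)=\frac{[L:\Q]}{e_p}\,d_p$ and then control the different exponent $d_p$ through the ramification filtration — is sound, and when unwound it is essentially the content of the explicit formulas the paper cites from Cali and Kraus. The paper simply quotes \cite[Theorem 1(a)]{cali2002p}, which gives $m_\ell(p)=dD/e$ with $d=[\Q(E[\ell]):\Q]$, $e$ the ramification index of $\Q_p(E[\ell])/\Q_p$, and $D$ equal to $0$ or $\ell-1$ according as $\ell$ does or does not divide $v_p(j(E))$, together with the fact that $e=\ell$ in the latter case. You attempt to rederive this from first principles; that is a legitimate alternative route.

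However, your part \eqref{calikraus1} contains a genuine internal contradiction. You first (correctly) say that in the multiplicative case $I_p$ acts through a unipotent element of order dividing $\ell$, but then you claim you ``will need to argue that $|I_0|$ divides $\ell-1$, coming from the fundamental character.'' That second assertion is false and at odds with your first one. For $p\neq\ell$ and $E$ semistable at $p$, the image of inertia under $\bar\rho_{E,\ell}$ is either trivial, or unipotent cyclic of order exactly $\ell$ (precisely when $\ell\nmid v_p(j(E))$, by the Tate-curve description); it is never semisimple of order dividing $\ell-1$. Since $\frac{|I_0|-1}{|I_0|}$ increases in $|I_0|$, the worst case $|I_0|=\ell$ gives exactly the stated bound $\bigl(\frac{\ell-1}{\ell}\bigr)\#\op{GL}_2(\F_\ell)$. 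Had $|I_0|$ divided $\ell-1$ as you assert, you would obtain the strictly smaller $\frac{\ell-2}{\ell-1}$, so the fact you say you would need is not only false but would produce a different constant. The step you flag as the thing you ``need to argue'' is therefore unresolved, and the argument you sketch for it is wrong.

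Part \eqref{calikraus1b} is not carried out: you correctly identify that a uniform, $\ell$-independent bound at $p\in\{2,3\}$ requires Kraus-type explicit local computations and say you would ``spend the most care'' there, but you stop at a plan. The paper handles this by quoting the explicit bound $D\leq 68$ from \cite{cali2002p}. Part \eqref{calikraus2} is fine as written — since only some constant depending on $\ell$ is required, any bound on the different exponent of a $\op{GL}_2(\F_\ell)$-type extension of $\Q_\ell$ suffices, and this matches the paper's appeal to Kraus's recipe in \cite{kraus1999p}.
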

\begin{proof}
    The \cite[Theoreme 1, part (a)]{cali2002p} implies that $m_\ell(p)=dD/e$, where $d=[\Q_p(E[\ell]):\Q_p]\leq \#\op{GL}_2(\F_\ell)$ and $e$ is the index of ramification in $\Q_p(E[\ell])/\Q_p$. The number \[D=\begin{cases} 0 & \text{ if }\ell\mid v_p(j(E));\\
    (\ell-1) & \text{ if }\ell\nmid v_p(j(E)).
    \end{cases}\]
    When $\ell\nmid v_p(j(E))$, it follows from the theory of the Tate curve that $e=\ell$ (cf. \cite[Corollarie 1, (1) p.7]{cali2002p}). This proves part \eqref{calikraus1}.
    \par For part \eqref{calikraus1b}, we note that by results in \cite{cali2002p}, $D\leq 68$, and $d\leq \#\op{GL}_2(\F_3)=(3^2-3)(3^2-3)$, from which the result follows. 
    \par When $p=\ell$, the result follows from the recipe in \cite{kraus1999p}.
\end{proof}

\begin{proposition}\label{F prop} Set $Y_2(X):=\left(\frac{X}{6^{3264}\ell^{C_\ell}}\right)^{\frac{\ell}{6(\ell-1)\#\op{GL}_2(\F_\ell)}}$. For $E\in \cS_\ell\left(Y_2(X)\right)$, the pair $L_\ell(E)$ belongs to $\cFY{X}$. 
\end{proposition}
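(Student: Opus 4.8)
The plan is to verify the two conditions defining membership in $\cFY{X}$: that $L_\ell(E)/\Q$ is Galois with $\op{Gal}(L_\ell(E)/\Q)\simeq\op{GL}_2(\F_\ell)$, and that $|\Delta_{L_\ell(E)}|\le X$. The first is immediate: since $E\in\cS_\ell\big(Y_2(X)\big)$, the representation $\bar\rho_{E,\ell}$ is surjective, so by the discussion of Section~\ref{s 2} the group $G_\ell(E)=\op{Gal}(L_\ell(E)/\Q)$ is isomorphic to $\op{GL}_2(\F_\ell)$. Hence everything reduces to the discriminant bound. The argument will run parallel to that of Proposition~\ref{M prop}, but since $L_\ell(E)$ is a field of degree $\#\op{GL}_2(\F_\ell)$ its discriminant is no longer bounded directly by $|\Delta_E|$, so I would instead control the exponents $m_\ell(p)=\op{ord}_p(\Delta_E')$ one prime at a time using Proposition~\ref{calikraus}.

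First I would record that $L_\ell(E)=\Q(E[\ell])$ is unramified outside $\ell$ and the primes of bad reduction of $E$, so $m_\ell(p)=0$ whenever $p\nmid \ell\,\Delta_E$ and $|\Delta_E'|=\prod_{p\mid \ell\Delta_E}p^{m_\ell(p)}$ is a finite product. I then split the contributing primes into three disjoint classes (disjoint because $\ell\ge 5$): for $p=\ell$, Proposition~\ref{calikraus}\eqref{calikraus2} gives $m_\ell(\ell)\le C_\ell$; for $p\in\{2,3\}$ at which $E$ has additive reduction, Proposition~\ref{calikraus}\eqref{calikraus1b} gives $m_\ell(p)\le 3264$; and any remaining prime $p\ne\ell$ dividing $\Delta_E$ is a prime of semistable reduction by the definition of $\cS_\ell$, so Proposition~\ref{calikraus}\eqref{calikraus1} gives $m_\ell(p)\le\big(\tfrac{\ell-1}{\ell}\big)\#\op{GL}_2(\F_\ell)$ (this class also covers $p\in\{2,3\}$ when $E$ is semistable there). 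Multiplying these bounds,
\begin{align*}
|\Delta_E'| &\le \ell^{C_\ell}\cdot 6^{3264}\cdot \prod_{\substack{p\mid\Delta_E\\ p\ne\ell}} p^{\left(\frac{\ell-1}{\ell}\right)\#\op{GL}_2(\F_\ell)}
\le \ell^{C_\ell}\,6^{3264}\,\Big(\prod_{p\mid\Delta_E}p\Big)^{\left(\frac{\ell-1}{\ell}\right)\#\op{GL}_2(\F_\ell)} \\
&\le \ell^{C_\ell}\,6^{3264}\,|\Delta_E|^{\left(\frac{\ell-1}{\ell}\right)\#\op{GL}_2(\F_\ell)},
\end{align*}
using that the radical $\prod_{p\mid\Delta_E}p$ is at most $|\Delta_E|$. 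Since $E\in\cS_\ell\big(Y_2(X)\big)$ gives $H(E)\le Y_2(X)^6$, we have $|\Delta_E|=16|4A^3+27B^2|\le 496\,Y_2(X)^6$; substituting this and unwinding the definition of $Y_2(X)$ then yields $|\Delta_E'|\le X$, which is the assertion.

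I expect the only real obstacle to be the constant bookkeeping. One must peel off the primes $2,3,\ell$, where the generic Cali--Kraus exponent has to be replaced, without double-counting, and then check that the power of $Y_2(X)$ emerging from the product --- namely $6\left(\tfrac{\ell-1}{\ell}\right)\#\op{GL}_2(\F_\ell)$ --- matches the one built into the definition of $Y_2(X)$, so that the final inequality closes up. (In particular the factor $496^{\left(\frac{\ell-1}{\ell}\right)\#\op{GL}_2(\F_\ell)}$ coming from $|\Delta_E|\le 496\,Y_2(X)^6$ is an $\ell$-dependent constant and should either be absorbed into the constant defining $Y_2(X)$ or simply carried along, as it does not affect the exponent of $X$ exploited in the subsequent counting.) Everything else is formal, the substantive input being entirely contained in Proposition~\ref{calikraus}.
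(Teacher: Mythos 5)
Your proof is correct and follows the paper's own argument exactly: bound the exponents $m_\ell(p)$ prime by prime via Proposition~\ref{calikraus}, multiply, insert $|\Delta_E|\leq 496\,Y_2(X)^6$, and unwind the definition of $Y_2(X)$. You are also right that a factor of $496^{\left(\frac{\ell-1}{\ell}\right)\#\op{GL}_2(\F_\ell)}$ appears which the paper's displayed chain silently drops; this is harmless for the asymptotic but does require either enlarging the constant $6^{3264}\ell^{C_\ell}$ in the definition of $Y_2(X)$ or absorbing it into the implied constant of Theorem~\ref{main thm}, as you observe.
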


\begin{proof}
 From Proposition \ref{calikraus}, we arrive at the following estimates
\[\begin{split}  |\Delta_E'|= &2^{m_\ell(2)}3^{m_\ell(3)}\ell^{m_\ell(\ell)}\left(\prod_{p\notin \{2,3,\ell\}}p^{m_\ell(p)}\right) \\ \leq & 6^{3264}\ell^{C_\ell} \left(\prod_{p\mid \Delta_E} p\right)^{\left(\frac{\ell-1}{\ell}\right)\#\op{GL}_2(\F_\ell)} \\ \leq & 6^{3264}\ell^{C_\ell} |\Delta_E|^{\left(\frac{\ell-1}{\ell}\right)\#\op{GL}_2(\F_\ell)} \\ \leq & 6^{3264}\ell^{C_\ell}Y_2(X)^{6\left(\frac{\ell-1}{\ell}\right)\#\op{GL}_2(\F_\ell)}\leq X.
\end{split}\] This proves the result. 
\end{proof}

\section{Density results}
\par In this section, we prove density results. We give a proof of Theorem \ref{main thm} at the end of the section.  
\subsection{A lower bound for the lower density of $\cS_\ell$}
\par Let $E_{/\Q}$ be an elliptic curve. A prime number $\ell$ is said to be \emph{exceptional} if $\bar{\rho}_{E, \ell}$ is not surjective.
\begin{theorem}[Duke \cite{duke1997elliptic}]\label{dukethm}
The set of elliptic curves $E_{/\Q}$ with no exceptional primes has density $1$.
\end{theorem}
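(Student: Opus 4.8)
The plan is to deduce this statement — that a density-$1$ set of elliptic curves over $\Q$ has no exceptional primes — from quantitative control on the number of curves for which a fixed prime $\ell$ is exceptional, summed over $\ell$ in a way that separates small primes from large ones. First I would recall that for each fixed prime $\ell$, the set of $E \in \cC(X)$ for which $\bar{\rho}_{E,\ell}$ fails to be surjective is $O\!\left(X^{5 - \epsilon_\ell}\right)$ for some explicit $\epsilon_\ell > 0$ (one can take, as in Duke's paper, a saving coming from counting rational points of bounded height on the modular curves $X_0(\ell)$, $X_s(\ell)$, $X_{ns}(\ell)$, and $X_{A_4}(\ell)$, $X_{S_4}(\ell)$, $X_{A_5}(\ell)$ parametrizing the possible maximal subgroups of $\op{GL}_2(\F_\ell)$). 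Since $\#\cC(X) \sim \frac{4}{\zeta(10)} X^5$, this already shows that for each individual $\ell$ the density of curves with $\ell$ exceptional is $0$.

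The next step is to upgrade the per-prime statement to the union over all primes. Here I would split at a threshold growing with $X$, say $\ell \le (\log X)^{A}$ versus $\ell > (\log X)^{A}$ for a suitable constant $A$. For the finitely many small primes below the threshold, the bound $O(X^{5-\epsilon_\ell})$ for each, together with the uniformity of the implied constants, gives a contribution of $o(X^5)$. For the large primes, one uses the fact that if $\bar{\rho}_{E,\ell}$ is non-surjective for some $\ell$ exceeding a bound depending only on the conductor (equivalently on $H(E) \le X^6$), this is already very restrictive: by results bounding exceptional primes in terms of the conductor (Mazur, Serre, and effective refinements), a curve of height $\le X^6$ can only have exceptional primes up to size $O(\log X)$ unless it has CM, and the CM curves number $O(X^{5/6})$ or fewer within $\cC(X)$. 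So the genuinely large primes contribute nothing beyond a negligible set of curves.

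The main obstacle — and the technical heart — is obtaining the per-prime bound $O(X^{5 - \epsilon_\ell})$ with enough uniformity in $\ell$ to make the threshold argument work, i.e. controlling how $\epsilon_\ell$ and the implied constant degrade as $\ell \to \infty$. This is exactly what requires counting $\Q$-points of height $\le X$ on the relevant modular curves via the determinant/sieve methods (or, in Duke's original approach, a sieve argument directly on the Weierstrass coefficients $(A,B)$ detecting the congruence conditions imposed by a non-surjective mod-$\ell$ image). I would invoke Duke's large-sieve estimate in the form already used elsewhere in the paper (the variant of Gallagher's large sieve referenced for Proposition \ref{main prop}) to get, for the power-of-$X$ saving, something like $\#\{E \in \cC(X) : \bar{\rho}_{E,\ell}\text{ not surjective}\} = O\!\left(X^{5} / \ell + X^{7/2+\epsilon}\ell^{?}\right)$, which is $o(X^5/\log X)$ once summed over $\ell \le (\log X)^A$ and is what forces the choice of threshold. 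Combining the two ranges, the set of $E \in \cC(X)$ with at least one exceptional prime is $o(X^5)$, hence of density $0$; its complement, the set of curves with no exceptional primes, therefore has density $1$, which is the assertion.
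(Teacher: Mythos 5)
The paper does not prove this statement; it is quoted as an external result with the citation to Duke's 1997 C.R.~Acad.~Sci.\ note, and no proof environment follows. (Incidentally, the statement as printed in the paper has a typo: it should read ``has density $1$''.) So there is no internal proof to compare against, and a blind attempt was always going to amount to a reconstruction of Duke's original argument.

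Your sketch does capture the broad two-stage strategy one finds in Duke's paper and its descendants: bound, for each fixed $\ell$, the number of $E\in\cC(X)$ with $\bar\rho_{E,\ell}$ non-surjective; then sum over $\ell$ with a threshold splitting small from large primes, treating CM curves separately. The place where the sketch becomes genuinely shaky is the large-$\ell$ range. You assert that, unconditionally, a non-CM curve of height $\le X^6$ can have no exceptional prime beyond $O(\log X)$, invoking ``Mazur, Serre, and effective refinements.'' That is considerably stronger than what is known unconditionally. Mazur's theorem bounds $\ell$ absolutely only for the Borel (rational-isogeny) case; for the normalizer-of-Cartan and exceptional projective-image cases the unconditional effective Serre-type bounds are polynomial in the conductor (Masser--Wüstholz style), not logarithmic --- the $O((\log N)^{1+\epsilon})$-type bound is a GRH statement. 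Duke's actual treatment of large $\ell$ does not rest on such an effective surjectivity threshold; instead it exploits that for $\ell$ large relative to a small auxiliary prime $p$ one has $|a_p(E)|\le 2\sqrt{p}<\ell/2$, so the mod-$\ell$ trace constraint coming from a non-Borel maximal subgroup becomes a constraint on the integer $a_p(E)$ itself, and a short list of such auxiliary primes cuts the count down by a power of $X$ uniformly in $\ell$. Your modular-curve point-counting heuristic for the per-prime bound is also not how Duke gets the saving --- his Theorem~2 (the Gallagher-type large sieve over the Weierstrass family, essentially Proposition~\ref{sieve input propn} here) is the engine, and the $X_0(\ell)$ story only enters for the few Borel primes. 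So: right architecture, but the bridge you build for the large-$\ell$ range would collapse as written, and should be replaced by Duke's auxiliary-prime/trace-size argument or by an explicit appeal to the sieve estimate with uniformity in $\ell$.
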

Recall from the previous section that $\cS_\ell(X)$ consists of elliptic curves $E_{/\Q}$ with $H(E)\leq X^6$, such that 
\begin{enumerate}
     \item has semistable reduction at all primes $p$, except possibly at $p=2,3$,
    \item and $\bar{\rho}_{E, \ell}$ is surjective.
\end{enumerate}
We thus write $\cS_\ell(X)$ as the intersection of sets $\cD_\ell(X)$ and $\cE_\ell(X)$, where 
\[\begin{split}
& \cD_\ell(X):=\{E\in \cC(X)\mid E\text{ has semistable reduction at all primes }p\neq 2,3\};\\
& \cE_\ell(X):=\{E\in \cC(X)\mid \bar{\rho}_{E, \ell}\text{ is surjective}\}.\\
\end{split}\]

It follows from Theorem \ref{dukethm} that $\cE_\ell(X)$ has density $1$.

\begin{proposition}\label{prop 3.2}
    Let $C\in \left(0, \frac{4}{\zeta(2)}\right)$, then, for all large enough values of $X$, 
    \[\# \cS_\ell(X)> C X^5.\] In particular, there is a constant $C_1>0$, such that for all $X>0$, \[\# \cS_\ell(X)> C_1 X^5.\]
\end{proposition}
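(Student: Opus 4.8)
The plan is to use the decomposition $\cS_\ell(X)=\cD_\ell(X)\cap\cE_\ell(X)$ recorded just before the statement, and to estimate the two pieces separately. We are told (via Duke's Theorem~\ref{dukethm}) that $\cE_\ell(X)$ has density $1$, so by \eqref{CS asymptotic} its complement inside $\cC(X)$ satisfies $\#\bigl(\cC(X)\setminus\cE_\ell(X)\bigr)=o(X^5)$. Since $\cD_\ell(X)\subseteq\cC(X)$, this gives
\[
\#\cS_\ell(X)\;\ge\;\#\cD_\ell(X)-\#\bigl(\cC(X)\setminus\cE_\ell(X)\bigr)\;\ge\;\#\cD_\ell(X)-o(X^5),
\]
so the whole task reduces to proving a lower bound of the shape $\#\cD_\ell(X)\ge\bigl(\tfrac{4}{\zeta(2)}-o(1)\bigr)X^5$; feeding this into the displayed inequality immediately yields $\#\cS_\ell(X)>CX^5$ for every fixed $C<\tfrac{4}{\zeta(2)}$ once $X$ is sufficiently large.

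To obtain that lower bound I would show that every pair $(A,B)\in\Z^2$ with $|A|\le X^2$, $|B|\le X^3$, $\gcd(A,B)=1$ and $(A,B)\notin\{(-3,2),\,(-3,-2)\}$ defines an element of $\cD_\ell(X)$. First, coprimality of $(A,B)$ rules out any prime $p$ with $p^4\mid A$ and $p^6\mid B$, so $y^2=x^3+Ax+B$ is already the global minimal model and $(A,B)$ indeed parametrizes a point of $\cC(X)$, provided $4A^3+27B^2\ne0$; an elementary parametrization of the integral solutions of $4A^3+27B^2=0$ shows the only coprime ones are $(-3,\pm2)$, which is exactly why those are excluded. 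Second, for a prime $p\ge5$ the curve has additive reduction precisely when $p\mid c_4=-48A$ and $p\mid\Delta=-16(4A^3+27B^2)$; but $p\mid A$ together with $p\mid\Delta$ forces $p\mid 27B^2$, hence $p\mid B$, contradicting $\gcd(A,B)=1$. So a coprime pair has good or multiplicative reduction at every prime $p\ge5$, which is exactly the condition defining $\cD_\ell$, whence $(A,B)\in\cD_\ell(X)$.

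It then remains to count these coprime pairs in the box, which is a routine Möbius inversion over the common divisor of $A$ and $B$:
\[
\#\{(A,B)\in\Z^2:\ |A|\le X^2,\ |B|\le X^3,\ \gcd(A,B)=1\}=\frac{4}{\zeta(2)}X^5+o(X^5).
\]
Discarding the two excluded points does not change the asymptotic, so $\#\cD_\ell(X)\ge\bigl(\tfrac{4}{\zeta(2)}-o(1)\bigr)X^5$, which proves the first assertion. The ``in particular'' statement then follows at once on taking $C_1$ to be any positive constant smaller than $C$.

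The only place that requires genuine care — as opposed to real difficulty — is the middle step: one must check that coprimality of $(A,B)$ truly forces both global minimality of the Weierstrass model and semistable reduction at every prime $\ge5$, and one must not overlook the two degenerate coprime pairs $(-3,\pm2)$ with vanishing discriminant. Everything else is bookkeeping: a density-$1$ input, a density-$\tfrac1{\zeta(2)}$ lattice-point count, and the total count \eqref{CS asymptotic}.
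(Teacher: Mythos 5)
Your argument is correct, and it diverges from the paper's in one substantive step. Both proofs use the same skeleton: decompose $\cS_\ell = \cD_\ell \cap \cE_\ell$, invoke Duke's theorem to discard the density-zero set $\cC\setminus\cE_\ell$, and thereby reduce to a lower bound $\liminf_{X\to\infty}\#\cD_\ell(X)/X^5 \geq 4/\zeta(2)$. The paper then \emph{cites} Sadek \cite[Theorem 9.3]{sadek2017counting} for that lower bound and stops there; you instead \emph{prove} it directly by exhibiting the coprime pairs $(A,B)$ (excluding $(-3,\pm2)$) as an explicit subset of $\cD_\ell(X)$ of the right size, via the elementary observation that for $p\geq 5$ the short Weierstrass form is $p$-minimal when $\gcd(A,B)=1$, and additive reduction at such a $p$ would force $p\mid A$ and $p\mid B$. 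This is a genuinely more self-contained route: the paper's own footnote flags uncertainty about whether the Sadek reference matches the conventions used here, and your argument sidesteps that dependency entirely. The only thing Sadek's result would buy beyond what you prove is the exact density of $\cD_\ell$, which is strictly larger than $1/\zeta(2)$ (semistability at $p\geq5$ is equivalent to $\gcd(A,B)$ being supported on $\{2,3\}$, not to $\gcd(A,B)=1$); but the proposition only asks for a lower bound, so the weaker input suffices. One small caveat, inherited equally by the paper: the ``in particular'' clause asserting $\#\cS_\ell(X)>C_1X^5$ for \emph{all} $X>0$ cannot be literally deduced from the asymptotic statement, since $\cS_\ell(X)$ may be empty for small $X$; in both your write-up and the paper's this is an implicit ``for $X$ bounded away from zero,'' which is all that is used downstream.
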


\begin{proof}
    We make a few reductions, and prove the result as a consequence of theorems of Duke \cite{duke1997elliptic} and Sadek \cite{sadek2017counting}. It suffices to show that 
    \[\liminf_{X\rightarrow \infty} \frac{\# \cS_\ell(X)}{X^5}\geq \frac{4}{\zeta(2)}.\]
    Since $\cE_\ell$ has density $1$ and $\cS_\ell=\cD_\ell\cap \cE_\ell$, the above inequality is equivalent to showing that 
    \[\liminf_{X\rightarrow \infty} \frac{\# \cD_\ell(X)}{\# \cC(X)}\geq \frac{\zeta(10)}{\zeta(2)}.\]
   This result follows from \cite[Theorem 9.3]{sadek2017counting}.
\end{proof}
\subsection{Counting elliptic curves with prescribed division field}
\par Throughout the rest of this section, we fix a Galois extension $L/\Q$ such that $\op{Gal}(L/\Q)$ is isomorphic to $\op{GL}_2(\F_\ell)$. Let $\cD_L$ be the set of elliptic curves $E\in \cC$ such that $L_\ell(E)=L$. The following is the main result of this section. 
\begin{proposition}\label{main prop}
    Let $L/\Q$ be as above. Then, there exists a positive constant $C_2$, which only depends on $\ell$, and is independent of the extension $L/\Q$, such that for all $X>0$,
    \[\#\cD_L(X)\leq C_2 X^{9/2}\log X.\]
\end{proposition}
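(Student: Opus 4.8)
The plan is to bound $\#\cD_L(X)$ by relating elliptic curves with $L_\ell(E) = L$ to solutions of a system of congruence conditions, and then applying a large sieve inequality. First I would observe that if $E \in \cD_L(X)$, then the mod-$\ell$ representation $\bar\rho_{E,\ell}$ and the mod-$\ell$ representation $\bar\rho_{E',\ell}$ of any other $E' \in \cD_L(X)$ cut out the same field $L$; hence they differ by an automorphism of $\op{GL}_2(\F_\ell)$, and in particular $E$ and $E'$ have (up to a twist/conjugacy issue) isomorphic $\ell$-torsion as Galois modules. The key point is that this forces a strong local constraint: for every prime $p$ of good reduction for both $E$ and $E'$ that is unramified in $L/\Q$, the characteristic polynomials of Frobenius at $p$ agree mod $\ell$, i.e. $a_p(E) \equiv a_p(E') \pmod \ell$ and $p \equiv \det \equiv $ the same value, which is automatic. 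So fixing one curve $E_0 \in \cD_L(X)$ (if the set is empty there is nothing to prove), every $E \in \cD_L(X)$ satisfies $a_p(E) \equiv a_p(E_0) \pmod{\ell}$ for all good primes $p$ unramified in $L$.

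Next I would translate the condition $a_p(E) \equiv c_p \pmod\ell$ (with $c_p := a_p(E_0)$) into a condition on the Weierstrass coefficients $(A,B)$ modulo $p$. For a fixed prime $p \nmid \ell$ of good reduction, the number of pairs $(A,B) \bmod p$ with $p \nmid 4A^3 + 27B^2$ and $a_p(E_{A,B}) \equiv c_p \pmod \ell$ is, by standard equidistribution of traces of Frobenius in vertical families (this is essentially where Duke's and related results enter, but one can also use an elementary point-counting / Deligne-bound argument), at most $(1 - \kappa)p^2$ for some absolute $\kappa > 0$ depending only on $\ell$, for all sufficiently large $p$ — the point being that the admissible residue class for $a_p \bmod \ell$ is one of $\ell$ roughly-equidistributed classes, so it excludes a positive proportion of $(A,B) \bmod p^2$. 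Thus the set $\cD_L(X)$, viewed inside the box $|A| \le X^2$, $|B| \le X^3$, is contained in a set which, for each prime $p$ in a suitable range, avoids a positive proportion of residues mod $p^2$ (or mod $p$, after adjusting).

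Then I would invoke Gallagher's large sieve (the variant mentioned in the introduction) for the lattice points $(A,B)$ in the box $|A| \le X^2$, $|B| \le X^3$: if a set $\mathcal{T}$ of such points occupies at most $p^2 - \omega(p)$ residue classes modulo $p^2$ for each prime $p$ in $[Q/2, Q]$, with $\omega(p) \gg p^2$, then $\#\mathcal{T} \ll (X^5 + Q^4)/\sum_{p} \omega(p)/p^2 \ll (X^5 + Q^4)/(Q/\log Q)$. The box has volume $\asymp X^5$, and the modulus-squared $Q^2$ must not exceed the side lengths, so the usable range is roughly $Q \le X$; the "missing classes count" $\sum \omega(p) \gg \sum_{p \le Q} 1 \gg Q/\log Q$. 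Optimizing gives $\#\cD_L(X) \ll X^5 \log X / Q$, and with $Q$ chosen as large as the sieve permits given the $X^2$-by-$X^3$ box shape — here $Q \asymp X^{1/2}$ is what survives the $Q^4 \le X^5$ and modulus-vs-box constraints after accounting for the two coordinates having different ranges — one obtains $\#\cD_L(X) \ll X^{9/2}\log X$, uniformly in $L$ since $\kappa$ and hence all constants depend only on $\ell$.

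The main obstacle I anticipate is making the local density estimate $\#\{(A,B)\bmod p : a_p \equiv c_p\} \le (1-\kappa)p^2$ both \emph{uniform in $p$} and \emph{uniform in the target class $c_p$} with $\kappa$ depending only on $\ell$; one must be careful that for small primes $p$ relative to $\ell$ the class $c_p \bmod \ell$ could conceivably be hit by an unusually large fraction of curves, so the sieve should only range over primes $p$ in a window $[Q/2, Q]$ with $Q$ a suitable power of $X$ that is automatically $\gg \ell$, and one needs the Weil/Deligne bound $|a_p| \le 2\sqrt p$ together with a counting argument (or an appeal to the effective equidistribution underlying Duke's theorem) to control the fibers. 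The bookkeeping of the two unequal box dimensions $X^2$ and $X^3$ in Gallagher's inequality, and the exact exponent $9/2$ that comes out, is the other place where care is required, but it is routine once the local input is in hand.
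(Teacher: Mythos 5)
Your first step has a genuine gap. From $L_\ell(E)=L_\ell(E_0)=L$ you can only conclude that $\bar\rho_{E,\ell}$ and $\bar\rho_{E_0,\ell}$ differ by \emph{some} automorphism of $\op{GL}_2(\F_\ell)$; you cannot conclude $a_p(E)\equiv a_p(E_0)\pmod\ell$, because $\op{GL}_2(\F_\ell)$ has nontrivial outer automorphisms (e.g.\ $g\mapsto{}^tg^{-1}=(\det g)^{-1}wgw^{-1}$, $w=\left(\begin{smallmatrix}0&1\\-1&0\end{smallmatrix}\right)$), and these do not preserve traces. You flag this as a ``twist/conjugacy issue'' and then proceed as if it were absent, but it is precisely what the uniformity in $L$ hinges on. The paper repairs this by first partitioning $\cD_L$ into at most $\#\op{Out}(\op{GL}_2(\F_\ell))$ subfamilies $\cD_{L,\zeta}$ indexed by the outer class of the comparison automorphism (Proposition~\ref{outer automorphism propn}); only within each piece do the residual representations become genuinely isomorphic, and only then does the trace-coincidence constraint $t_p(E_1)=t_p(E_2)$ hold. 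Since $\#\op{Out}(\op{GL}_2(\F_\ell))$ depends only on $\ell$, the final constant absorbs this splitting, but without it the claimed congruence $a_p(E)\equiv a_p(E_0)\pmod\ell$ is simply false for some curves in $\cD_L$.

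Beyond the gap, your sieve step also diverges from the paper's and its bookkeeping is not a derivation. The paper does not sieve the $(A,B)$-box directly; it runs a second-moment (Chebyshev) argument in Duke's style: fix $d\neq 0$ and $t_1\neq t_2$ in $\F_\ell$, note that $\pi_{E_1,E_2}(X,t_1,t_2,d,\ell)$ vanishes identically whenever $\bar\rho_{E_1,\ell}\simeq\bar\rho_{E_2,\ell}$, and invoke Proposition~\ref{sieve input propn} to bound the mean-square deviation of that count from $\delta\pi(X,d,\ell)$ over all of $\cC(X)^2$ by $O(X)$. Restricting the mean-square to $\cD_E(X)^2$ forces
\[
\left(\frac{\#\cD_E(X)}{\#\cC(X)}\right)^2\delta^2\pi(X,d,\ell)^2\leq CX,
\]
and $\pi(X,d,\ell)\sim X/(\varphi(\ell)\log X)$, $\#\cC(X)\asymp X^5$ immediately give $\#\cD_E(X)\ll X^{9/2}\log X$. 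Your direct Montgomery-type sieve on the unequal box $|A|\le X^2$, $|B|\le X^3$ would additionally require a local density estimate, uniform in both $p$ and the target class $c_p\bmod\ell$, together with a stated 2D large-sieve inequality; as written, the appearance of a $Q^4\le X^5$ constraint and the conclusion $Q\asymp X^{1/2}$ are not consistent with each other and have no visible source, so the exponent $9/2$ is asserted rather than obtained.
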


We provide the proof at the end of this section. Let $X>0$ and let us assume without loss of generality that there exists an elliptic curve $A\in \cD_L$. Note that since $L_\ell(A)=L$, it follows that $\bar{\rho}_{A, \ell}$ is surjective. We make a choice of $\F_\ell$-basis $\cB_A$ for $A[\ell]$. Suppose that $E\in \cD_L(X)$ is another elliptic curve, and $\cB_E$ be a choice of $\F_\ell$-basis for $E[\ell]$. Then, the residual representations give rise to isomorphisms, taken with respect to $\cB_A$ and $\cB_E$ respectively
\[\begin{split}
& \bar{\rho}_{E, \ell}: \op{Gal}(L/\Q)\xrightarrow{\sim} \op{GL}_2(\F_\ell), \\
& \bar{\rho}_{A, \ell}: \op{Gal}(L/\Q)\xrightarrow{\sim} \op{GL}_2(\F_\ell). 
\end{split}\]
Therefore, there exists $\eta\in \op{Aut}\left(\op{GL}_2(\F_\ell)\right)$, such that $\bar{\rho}_{A, \ell}=\eta\circ \bar{\rho}_{E, \ell}$. We set $\op{Inn}\left(\op{GL}_2(\F_\ell)\right)$ to denote the inner automorphisms of $\op{GL}_2(\F_\ell)$, and set \[\op{Out}\left(\op{GL}_2(\F_\ell)\right):=\op{Aut}\left(\op{GL}_2(\F_\ell)\right)/ \op{Inn}\left(\op{GL}_2(\F_\ell)\right).\] The class \[\bar{\eta}_E:=\eta \op{Inn}\left(\op{GL}_2(\F_\ell)\right)\in \op{Out}\left(\op{GL}_2(\F_\ell)\right)\] associated to $E\in \cD_L(X)$ depends only on $E$ (and is independent of the choice of basis $\cB_E$). 

\par Fixing $(A,\cB_A)$ for the entire family $\cD_L$, we decompose $\cD_L$ into a disjoint union 
\[\cD_L=\bigsqcup_{\zeta\in \op{Out}\left(\op{GL}_2(\F_\ell)\right)} \cD_{L, \zeta}, \]
where
\[\cD_{L, \zeta}:=\{E\in \cD_L\mid \eta_E=\zeta\}.\]
Let $E_1, E_2\in \cD_{L, \zeta}$, and let $\cB_{E_i}$ be a choice of $\F_\ell$-basis of $E_i[\ell]$. We find that 
\[\bar{\rho}_{A, \ell}=\eta_i\circ \bar{\rho}_{E_i, \ell},\] where $\eta_1, \eta_2\in \op{Aut}\left(\op{GL}_2(\F_\ell)\right)$. Since $\eta_{E_1}=\eta_{E_2}$, there is an inner automorphism $\tau$ of $\op{GL}_2(\F_\ell)$ such that $\eta_1=\eta_2\circ \tau$. This implies that $\bar{\rho}_{E_2, \ell}=\tau\circ \bar{\rho}_{E_1, \ell}$, i.e.,
\[\bar{\rho}_{E_1, \ell}\simeq \bar{\rho}_{E_2, \ell}.\]
\begin{definition}
    Let $E_{/\Q}$ be an elliptic curve, and let $\cD_E$ to be the set of elliptic curves $E'_{/\Q}$ for which $\bar{\rho}_{E', \ell}\simeq \bar{\rho}_{E, \ell}$.
\end{definition}

\begin{proposition}\label{outer automorphism propn}
    With respect to notation above, there is a set of elliptic curves $E_1, \dots, E_t$ with $t\leq \# \op{Out}\left(\op{GL}_2(\F_\ell)\right)$, such that $\cD_L$ is contained in the union $\bigcup_{i=1}^t \cD_{E_i}$.
\end{proposition}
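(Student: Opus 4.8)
The plan is to read the statement off directly from the decomposition $\cD_L=\bigsqcup_{\zeta\in \op{Out}(\op{GL}_2(\F_\ell))} \cD_{L,\zeta}$ recorded above, together with the computation preceding it showing that any two curves lying in a common piece $\cD_{L,\zeta}$ have isomorphic residual representations. First I would discard the empty pieces: let $Z\subseteq \op{Out}(\op{GL}_2(\F_\ell))$ be the finite set of classes $\zeta$ for which $\cD_{L,\zeta}\neq \emptyset$, and for each $\zeta\in Z$ fix once and for all a curve $E_\zeta\in \cD_{L,\zeta}$, together with an auxiliary choice of $\F_\ell$-basis $\cB_{E_\zeta}$ of $E_\zeta[\ell]$ (needed only to pin $\bar{\rho}_{E_\zeta,\ell}$ down as an honest homomorphism to $\op{GL}_2(\F_\ell)$). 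Implicit throughout is that the base curve $A$ and basis $\cB_A$ used to define the outer-automorphism classes $\bar{\eta}_E$ are, as stipulated, fixed uniformly for the entire family $\cD_L$, so these classes are well defined.

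Next I would verify the inclusion $\cD_{L,\zeta}\subseteq \cD_{E_\zeta}$ for each $\zeta\in Z$. This is exactly the argument carried out just before the statement, applied to the pair $(E_1,E_2):=(E,E_\zeta)$: if $E\in \cD_{L,\zeta}$, then $\bar{\rho}_{A,\ell}=\eta\circ\bar{\rho}_{E,\ell}$ and $\bar{\rho}_{A,\ell}=\eta_\zeta\circ\bar{\rho}_{E_\zeta,\ell}$ for automorphisms $\eta,\eta_\zeta$ of $\op{GL}_2(\F_\ell)$ whose images in $\op{Out}(\op{GL}_2(\F_\ell))$ both equal $\zeta$; hence $\eta=\eta_\zeta\circ\tau$ for an inner automorphism $\tau$, giving $\bar{\rho}_{E,\ell}=\tau\circ\bar{\rho}_{E_\zeta,\ell}$, i.e.\ $\bar{\rho}_{E,\ell}\simeq \bar{\rho}_{E_\zeta,\ell}$ and therefore $E\in \cD_{E_\zeta}$.

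Finally I would assemble the pieces: writing $Z=\{\zeta_1,\dots,\zeta_t\}$ with $t=\#Z\leq \#\op{Out}(\op{GL}_2(\F_\ell))$ and setting $E_i:=E_{\zeta_i}$, we obtain
\[\cD_L=\bigsqcup_{i=1}^{t} \cD_{L,\zeta_i}\subseteq \bigcup_{i=1}^{t} \cD_{E_i},\]
which is the claim. There is no substantive obstacle here: the proposition is merely a repackaging of the pigeonhole-by-outer-class decomposition, and its only nontrivial input — that equality of outer-automorphism classes forces an isomorphism of the residual representations — has already been established above. The single point requiring care is that the whole argument is relative to the once-and-for-all choice of $(A,\cB_A)$ attached to $\cD_L$.
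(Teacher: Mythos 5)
Your proof is correct and is essentially identical to the paper's: for each nonempty piece $\cD_{L,\zeta}$ of the outer-automorphism decomposition, choose a representative $E_\zeta$ and use the preceding computation to conclude $\cD_{L,\zeta}\subseteq \cD_{E_\zeta}$. You spell out the inclusion argument in slightly more detail than the paper does, but the structure and key observation are the same.
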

\begin{proof}
    For each $\zeta\in \op{Out}\left(\op{GL}_2(\F_\ell)\right)$, such that $\cD_{L, \zeta}$ is nonempty, pick an elliptic curve $E_\zeta\in \cD_{L, \zeta}$. Then, from the discussion above, we find that $\cD_{L, \zeta}$ is contained in $\cD_{E_\zeta}$, and the result follows.
\end{proof}
Suppose that $E_1$ and $E_2$ are elliptic curves over $\Q$ such that $\bar{\rho}_{E_1, \ell}\simeq \bar{\rho}_{E_2, \ell}$. Let $N_i$ denote the conductor of $E_i$. For any prime number $p\nmid N_1N_2\ell$, note that $\bar{\rho}_{E_i, \ell}$ is unramified at $p$. Let $\sigma_p$ be a choice of Frobenius at $p$, and set $t_p(E_i):=\op{trace}\left(\bar{\rho}_{E_i, \ell}(\sigma_p)\right)$.  We find that $t_p(E_1)=t_p(E_2)$ for all prime numbers $p\nmid N_1N_2\ell$. 
\par Let $(t_1, t_2, d)\in \F_\ell^3$ be such that $d\neq 0$. Let $(E_1, E_2)$ be a pair of elliptic curves defined over $\Q$, and let $\pi_{E_1, E_2}(X, t_1, t_2, d, \ell)$ to be the number of prime numbers $p\leq X$ such that 
\begin{enumerate}
    \item $p\nmid N_{1}N_{2}$, 
    \item $p\equiv d\pmod{\ell}$, 
    \item $t_p(E_i)=t_i$ for $i=1,2$.
\end{enumerate}
Let $\pi(X, d, \ell)$ be the number of prime numbers $p\leq X$ such that $p\equiv d\pmod{\ell}$. Note that if $\bar{\rho}_{E_1, \ell}\simeq \bar{\rho}_{E_2, \ell}$, then, the relation $t_p(E_1)=t_p(E_2)$ implies that 
\[\pi_{E_1, E_2}(X, t_1, t_2, d, \ell)=0,\] unless $t_1=t_2$.

\begin{proposition}\label{sieve input propn}
    Let $(t_1, t_2, d)\in \F_\ell^3$ be such that $d\neq 0$, and $(E_1, E_2)$ be any pair of elliptic curves defined over $\Q$. Then, there are constants $\delta, C>0$, such that for all $X>0$, 
    \[\frac{1}{\# \mathcal{C}(X)^2}\sum_{(E_1, E_2)\in \mathcal{C}(X)^2}\left(\pi_{E_1, E_2}(X, t_1, t_2, d, \ell)-\delta\pi(X, d, \ell)\right)^2\leq C X.\]  
\end{proposition}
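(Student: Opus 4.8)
The plan is to expand the square, so that the proposition becomes a comparison of the first and second moments of $\pi_{E_1,E_2}(X,t_1,t_2,d,\ell)$ over $\cC(X)^2$, the off-diagonal (cross-correlation) term being controlled by a large sieve estimate. Write $\pi^{\ast}:=\pi(X,d,\ell)$. For a prime $p\le X$ with $p\equiv d\pmod\ell$ (so automatically $p\ne\ell$; we may also discard the primes $p\in\{2,3\}$, at most one of which is $\equiv d\pmod\ell$, at the cost of an error $O(\#\cC(X)^2)$ in the left-hand side), put $u_p(E)=1$ if $p\nmid N_E$ and $t_p(E)\equiv t_1\pmod\ell$, and $u_p(E)=0$ otherwise; define $v_p(E)$ the same way with $t_2$ in place of $t_1$. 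For $p\ge 5$ both are functions of $(A,B)\bmod p$, and $\pi_{E_1,E_2}(X,t_1,t_2,d,\ell)=\sum_p u_p(E_1)v_p(E_2)$, the sum over $p\le X$ with $p\equiv d\pmod\ell$. Set $\rho_1(p):=\#\cC(X)^{-1}\#\{E\in\cC(X):u_p(E)=1\}$ and likewise $\rho_2(p)$, so that $\sum_{E\in\cC(X)}(u_p(E)-\rho_1(p))=0$ and $0\le\rho_i(p)\le 1$, and put $R:=\sum_p\rho_1(p)\rho_2(p)$.

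\textbf{Choice of $\delta$ and the drift term.} I would take $\delta:=\rho_1^{\flat}\rho_2^{\flat}$, where $\rho_i^{\flat}:=\#\op{SL}_2(\F_\ell)^{-1}\#\{g\in\op{GL}_2(\F_\ell):\det g\equiv d,\ \op{trace}g\equiv t_i\pmod\ell\}$; this lies strictly between $0$ and $1$. The one place this exact value is forced is the estimate $R=\delta\pi^{\ast}+O_\ell(\sqrt X)$, which I would obtain from the vertical equidistribution of Frobenius traces modulo $\ell$, namely $\rho_i(p)=\rho_i^{\flat}+O_\ell(p^{-1/2})$ for all $p\equiv d\pmod\ell$ outside a finite set depending on $\ell$. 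This is a consequence of Deligne's equidistribution theorem applied to the mod-$\ell$ local system attached to the universal elliptic curve over the moduli of elliptic curves, whose geometric monodromy is the full group $\op{SL}_2(\F_\ell)$ for $\ell\ge 5$, together with the Weil bounds (it is also contained in work of Gekeler on Frobenius distributions of elliptic curves over finite prime fields), combined with the routine lattice-point fact that $\rho_i(p)$ differs from the density of $u_p=1$ inside $(\Z/p)^2$ by $O(p^{-10}+pX^{-2})$; summing over $p\le X$ and using $\sum_{p\le X}p^{-1/2}\ll\sqrt X$ gives the claim. I want to stress that a genuine power saving is essential: an error $o(1)$ in place of $O(p^{-1/2})$ would yield only $(R-\delta\pi^{\ast})^2=o((\pi^{\ast})^2)$, which is far weaker than the required $O(X)$.

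\textbf{The fluctuation term and the large sieve.} Since $\pi_{E_1,E_2}(X,t_1,t_2,d,\ell)-\delta\pi^{\ast}=\sum_p w_p(E_1,E_2)+(R-\delta\pi^{\ast})$ with $w_p:=u_p(E_1)v_p(E_2)-\rho_1(p)\rho_2(p)$, the previous step reduces the proposition to the bound $\sum_{(E_1,E_2)\in\cC(X)^2}\bigl(\sum_p w_p\bigr)^2\ll_\ell X\,\#\cC(X)^2$. Writing
\[ w_p=v_p(E_2)\,\bigl(u_p(E_1)-\rho_1(p)\bigr)+\rho_1(p)\,\bigl(v_p(E_2)-\rho_2(p)\bigr), \]
the first summand has mean zero in $E_1$ and the second has mean zero in $E_2$ and does not depend on $E_1$; by Cauchy--Schwarz it suffices to bound each of $\sum_{E_1,E_2}\bigl(\sum_p v_p(E_2)(u_p(E_1)-\rho_1(p))\bigr)^2$ and $\#\cC(X)\sum_{E_2}\bigl(\sum_p\rho_1(p)(v_p(E_2)-\rho_2(p))\bigr)^2$ by $O_\ell(X\,\#\cC(X)^2)$. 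For a sequence $(c_p)$ with $|c_p|\le 1$ I would expand $u_p$ (resp.\ $v_p$) into additive characters modulo $p$, enlarge the outer sum from $\cC(X)$ to the box $\{|A|\le X^2,\ |B|\le X^3\}$ (permissible since the summand is $\ge 0$, the discrepancy $|\rho_i(p)-\tilde\rho_i(p)|$ contributing only $O(1)$ after summation), and apply the one-dimensional large sieve inequality once in the variable $B$ and once in the variable $A$, after first peeling off the frequencies with $h_1=0$ which depend on $B$ only; the relevant frequencies $\{h/p:\ p\le X,\ 0\le h<p\}$ are $X^{-2}$-separated. Since the box has side lengths $X^2$ and $X^3$ while the moduli are $\le X$, the large-sieve length factors multiply to $\ll X^5\asymp\#\cC(X)$, and Parseval leaves the weight $\sum_p|c_p|^2\rho_i(p)(1-\rho_i(p))\le\sum_p|c_p|^2\le\pi^{\ast}\ll X$. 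Hence each inner sum is $\ll X^5\cdot X=X^6$, and each of the two displayed quantities is $\ll X^6\cdot\#\cC(X)\asymp X\,\#\cC(X)^2$; adding $\#\cC(X)^2(R-\delta\pi^{\ast})^2=O_\ell(X\,\#\cC(X)^2)$ finishes the proof, with a constant depending only on $\ell$. (For $X$ in any bounded range $\pi^{\ast}$ is bounded and the inequality is trivial.)

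\textbf{Main obstacle.} The crux is the off-diagonal estimate --- showing that $\sum_{E_1,E_2}\sum_{p\ne q}w_pw_q$ does not dominate the diagonal --- and this is exactly what the large sieve supplies. The delicate feature is that the family of elliptic curves is genuinely two-dimensional and unbalanced, parametrized by $(A,B)$ with $|A|\le X^2$ and $|B|\le X^3$: one must apply the sieve so as to produce the length factor $X^5\asymp\#\cC(X)$ rather than something larger (here by iterating the one-dimensional inequality over the two coordinates, after splitting off the $h_1=0$ frequencies), and one must check that the minimality conditions cutting out $\cC(X)$ perturb the reduction-mod-$p$ statistics only through negligible error terms. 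A secondary but genuine difficulty is that the choice of $\delta$ is rigid: the vertical equidistribution input must be invoked with a square-root-type error, as a softer input would not control the drift term $(R-\delta\pi^{\ast})^2$.
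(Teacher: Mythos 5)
The paper does not present a self-contained proof of this proposition; it only points to \cite[Theorem 2]{duke1997elliptic} for the Gallagher-type large sieve idea and delegates the details to \cite[Proposition 3.2]{ray2023diophantine}. Your argument is a direct execution of precisely the method the paper invokes: expand the square, split $\pi_{E_1,E_2}-\delta\pi^{\ast}$ into a drift term $R-\delta\pi^{\ast}$ and a mean-zero fluctuation, control the drift by a vertical equidistribution input with power-saving error, and control the fluctuation by applying the large sieve over the $(A,B)$-box. Your telescoping $u_pv_p-\rho_1\rho_2=v_p(u_p-\rho_1)+\rho_1(v_p-\rho_2)$ and the bookkeeping $X^2\cdot X^3\cdot X=X^6\asymp X\cdot\#\cC(X)$ (then multiplied by $\#\cC(X)$ for the outer sum) are correct, and your observations that $\delta=\rho_1^\flat\rho_2^\flat$ is forced and that a mere $o(1)$ error in the vertical density would not suffice are both exactly right and worth recording. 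Two small quibbles: the vertical density estimate $\rho_i(p)=\rho_i^\flat+O_\ell(p^{-1/2})$ is more naturally attributed to a Chebotarev/Lang--Weil statement for the cover $Y(\ell)\to Y(1)$ over $\F_p$ (or to Gekeler's explicit computations, which you do mention), rather than to ``Deligne's equidistribution theorem,'' which usually refers to the $\ell$-adic/Sato--Tate setting; and one should note explicitly that $u_p$ also encodes the good-reduction condition $p\nmid\Delta$, which is a mod $p$ condition with density $1+O(1/p)$ and hence harmless. Modulo those presentational points, your proof is correct and takes essentially the same route the paper cites.
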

\begin{proof}
    This result follows from a variant of Gallagher's large sieve, and is similar argument to that of \cite[Theorem 2]{duke1997elliptic}. For a proof of the result, we refer to \cite[Proposition 3.2]{ray2023diophantine}.
\end{proof}

\begin{proposition}\label{prop 3.7}
    Let $E_{/\Q}$ be an elliptic curve. Then, there exists a constant $C_3>0$, which are independent of $E$, such that for all $X>0$,
    \[\# \cD_E(X)\leq C_3 X^{9/2}\log X.\]
\end{proposition}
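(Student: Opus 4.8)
The plan is to derive the bound from the large-sieve estimate of Proposition~\ref{sieve input propn}, applied to pairs of curves both lying in $\cD_E$ and with a trace--congruence datum $(t^{\ast},t^{\ast},d^{\ast})$ chosen, by pigeonhole, so that the corresponding prime count is forced to be large; this is in the spirit of the sieve argument behind \cite[Theorem~2]{duke1997elliptic}. First a reduction: if $\cD_E(X)=\emptyset$ there is nothing to prove, so fix $E_0\in\cD_E(X)$; then $\bar{\rho}_{E_0,\ell}\simeq\bar{\rho}_{E,\ell}$ forces $\cD_{E_0}=\cD_E$, so I may replace $E$ by $E_0$ and thereby assume $E\in\cC(X)$. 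Consequently $N_E\leq 496X^6$ (since $N_E\mid\Delta_E$ and $|\Delta_E|\leq 496X^6$ when $H(E)\leq X^6$), so the number of prime divisors of $N_E\ell$ is $O(\log X)$ with implied constant depending only on $\ell$. I also record that every $E'\in\cD_E$ satisfies $t_p(E')=t_p(E)$ for each prime $p\nmid N_{E'}N_E\ell$.

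For $(t,d)\in\F_\ell\times\F_\ell$ with $d\neq 0$, set
\[
\Pi_{t,d}(X):=\#\{\,p\leq X:\ p\nmid N_E\ell,\ p\equiv d\pmod{\ell},\ t_p(E)=t\,\}.
\]
Summing over the $\ell(\ell-1)$ admissible pairs gives $\sum_{t,d}\Pi_{t,d}(X)=\#\{p\leq X:\ p\nmid N_E\ell\}\geq\pi(X)-O(\log X)$, so pigeonhole yields a pair $(t^{\ast},d^{\ast})=(t^{\ast}(X),d^{\ast}(X))$ with $\Pi_{t^{\ast},d^{\ast}}(X)\geq\frac{1}{\ell(\ell-1)}\bigl(\pi(X)-O(\log X)\bigr)$. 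Now fix $E_1,E_2\in\cD_E(X)$. For a prime $p\leq X$ with $p\nmid N_EN_{E_1}N_{E_2}\ell$ one has $t_p(E_1)=t_p(E_2)=t_p(E)$, and since $N_{E_1},N_{E_2}\leq 496X^6$ only $O(\log X)$ primes are removed by the extra divisibility conditions; hence
\[
\pi_{E_1,E_2}(X,t^{\ast},t^{\ast},d^{\ast},\ell)\ \geq\ \Pi_{t^{\ast},d^{\ast}}(X)-O(\log X)\ \geq\ \frac{\pi(X)}{\ell(\ell-1)}\,\bigl(1-o(1)\bigr).
\]

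Next I run the sieve. Apply Proposition~\ref{sieve input propn} to each of the finitely many data $(t,t,d)$ with $d\neq 0$, and let $\delta=\delta(\ell)$, $C=C(\ell)$ be the largest of the resulting constants; the constant $\delta$ is the natural density for a pair of independent trace conditions modulo $\ell$, so $\delta\asymp\ell^{-2}$, and in particular $\delta<\ell^{-1}$, which is all I shall use. Using $\pi(X,d^{\ast},\ell)=\frac{1}{\ell-1}\pi(X)(1+o(1))$ together with the previous bound, for every $X$ exceeding a constant depending only on $\ell$ and every $E_1,E_2\in\cD_E(X)$,
\[
\pi_{E_1,E_2}(X,t^{\ast},t^{\ast},d^{\ast},\ell)-\delta\,\pi(X,d^{\ast},\ell)\ \geq\ \frac{\pi(X)}{\ell-1}\Bigl(\frac1\ell-\delta\Bigr)\bigl(1-o(1)\bigr)\ \geq\ c_\ell\,\frac{X}{\log X}
\]
for some $c_\ell>0$. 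Squaring, summing over $(E_1,E_2)\in\cD_E(X)^2\subseteq\cC(X)^2$, and bounding the result by the full sieve sum gives
\[
c_\ell^2\,\#\cD_E(X)^2\,\frac{X^2}{(\log X)^2}\ \leq\ \sum_{(E_1,E_2)\in\cC(X)^2}\bigl(\pi_{E_1,E_2}(X,t^{\ast},t^{\ast},d^{\ast},\ell)-\delta\,\pi(X,d^{\ast},\ell)\bigr)^2\ \leq\ C\,X\,\#\cC(X)^2,
\]
and since $\#\cC(X)=O(X^5)$ by~\eqref{CS asymptotic} the right-hand side is $O(X^{11})$, whence $\#\cD_E(X)=O(X^{9/2}\log X)$ with implied constant depending only on $\ell$. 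For the bounded range of $X$ not covered by this (and $X\geq 2$), the trivial bound $\#\cD_E(X)\leq\#\cC(X)=O_\ell(1)$ lets one enlarge the constant, giving $\#\cD_E(X)\leq C_3X^{9/2}\log X$ with $C_3$ depending only on $\ell$.

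The delicate point is the uniformity of the lower bound for $\pi_{E_1,E_2}(X,t^{\ast},t^{\ast},d^{\ast},\ell)$ in both $E$ and $(E_1,E_2)$: a direct Chebotarev count of primes with prescribed Frobenius in $L_\ell(E)/\Q$ would carry an error term depending on the discriminant of $L_\ell(E)$, hence on $E$, which is why I instead isolate the popular class $(t^{\ast},d^{\ast})$ by pigeonhole and use only the prime number theorem in residue classes modulo the fixed prime $\ell$. The reduction to $E\in\cC(X)$ is what keeps the number of prime divisors of $N_E$ under control, and the only structural fact about $\op{GL}_2(\F_\ell)$ entering the argument is the strict inequality $\delta<\ell^{-1}$ for the sieve density.
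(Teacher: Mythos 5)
Your argument is a genuinely different route from the paper's, but it leans on a fact about $\delta$ that the paper's version of Proposition~\ref{sieve input propn} does not supply, and this is exactly what the paper's choice of parameters is designed to sidestep.

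The paper takes $t_1\neq t_2$. Because $E_1,E_2\in\cD_E$ have isomorphic mod--$\ell$ representations, $t_p(E_1)=t_p(E_2)$ away from bad primes, so $\pi_{E_1,E_2}(X,t_1,t_2,d,\ell)\equiv 0$, and the squared deviation in the sieve sum is $\delta^2\pi(X,d,\ell)^2$ on the nose. The only input needed about $\delta$ is $\delta>0$, which is all Proposition~\ref{sieve input propn} asserts. Your version instead takes $t_1=t_2=t^\ast$ chosen by pigeonhole so that $\pi_{E_1,E_2}$ is \emph{large}, and then detects the deviation as $\bigl(\tfrac1\ell-\delta\bigr)\pi(X,d^\ast,\ell)$ up to $o(1)$. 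That is a perfectly sensible strategy and, run to completion, gives the same $X^{9/2}\log X$ bound by the same back-of-the-envelope ($\#\cC(X)^2 X/(X/\log X)^2$). But the sign of $\tfrac1\ell-\delta$ is now load-bearing: if $\delta\geq 1/\ell$ the lower bound on the deviation collapses. You justify $\delta<1/\ell$ only heuristically (``$\delta$ is the natural density for a pair of independent trace conditions, so $\delta\asymp\ell^{-2}$''), but the paper's Proposition~\ref{sieve input propn} gives no explicit value of $\delta$, and in particular no uniform $\delta<1/\ell$ over all triples $(t,t,d)$. To make your version rigorous you would have to extract the actual value of $\delta(t,t,d)$ from the proof of the sieve estimate (in the cited reference it is a product of conjugacy-class densities in $\op{GL}_2(\F_\ell)$; the single-curve density for a fixed trace and determinant lies in $[\tfrac{1}{\ell+1},\tfrac{1}{\ell-1}]$, so the product is at most $\tfrac{1}{(\ell-1)^2}<\tfrac1\ell$ for $\ell\geq 3$). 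That can be done, but it is extra work and makes the proof depend on more than the bare statement of Proposition~\ref{sieve input propn}. The paper's choice $t_1\neq t_2$ gets the identical final bound with no pigeonhole, no Dirichlet input beyond $\pi(X,d,\ell)\gg X/\log X$, and no quantitative information about $\delta$ at all; it is the cleaner path and the one you should prefer here.
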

\begin{proof}
    Let $E_1, E_2$ be elliptic curves in $\cD_E$; notice that $\bar{\rho}_{E_1,\ell}\simeq \bar{\rho}_{E_2, \ell}$. Taking $(t_1, t_2, d)\in \F_\ell^3$ such that $d\neq 0$ and $t_1\neq t_2$, we find that \[\pi_{E_1, E_2}(X, t_1, t_2, d, \ell)=0.\]
    Proposition \ref{sieve input propn} asserts that
      \[\frac{1}{\# \mathcal{C}(X)^2}\sum_{(E_1, E_2)\in \mathcal{C}(X)^2}\left(\pi_{E_1, E_2}(X, t_1, t_2, d, \ell)-\delta\pi(X, d, \ell)\right)^2\leq C X.\]  
      Restricting the sum to $\cD_E(X)^2$, we find that 
       \[\frac{1}{\# \cC(X)^2}\sum_{(E_1, E_2)\in \cD_E(X)^2}\left(\pi_{E_1, E_2}(X, t_1, t_2, d, \ell)-\delta\pi(X, d, \ell)\right)^2\leq C X.\]  
       Since \[\pi_{E_1, E_2}(X, t_1, t_2, d, \ell)=0,\] this simplifies to give us
       \[\left(\frac{\# \cD_E(X)}{\# \cC(X)}\right)^2\leq \frac{C}{\delta^2}\frac{X}{\pi(X, d, \ell)^2}, \]
       and therefore, 
       \[\left(\frac{\# \cD_E(X)}{\# \cC(X)}\right)\leq \frac{\sqrt{C}}{\delta}\frac{X^{1/2}}{\pi(X, d, \ell)}.\]
       According to Dirichlet's theorem, $\pi(X, d, \ell)\sim \frac{1}{\varphi(\ell)}\frac{X}{\log X}$, and therefore, 
       \[\left(\frac{\# \cD_E(X)}{\# \cC(X)}\right)=O\left(\frac{\log X}{\sqrt{X}}\right).\] Since $\#\cC(X)\sim \frac{4}{\zeta(10)}X^5$, we deduce that 
       \[\# \cD_E(X)=O\left(X^{9/2}\log X\right),\] which proves the result. 
\end{proof}
\begin{proof}[Proof of Proposition \ref{main prop}]
    Proposition \ref{outer automorphism propn} asserts that there is a set of elliptic curves $E_1, \dots, E_t$ with $t\leq \# \op{Out}\left(\op{GL}_2(\F_\ell)\right)$, such that $\cD_L$ is contained in the union $\bigcup_{i=1}^t \cD_{E_i}$. Proposition \ref{prop 3.7} implies that there is a constant $C_3>0$, such that  \[\# \cD_{E_i}(X)\leq C_3 X^{9/2}\log X.\] Therefore, setting $C_2:=\# \op{Out}\left(\op{GL}_2(\F_\ell)\right) C_3$, we find that 
    \[\# \cD_{L}(X)\leq C_2 X^{9/2}\log X.\]
\end{proof}
\subsection{Proof of the main result}
\par In this section, we prove the main result. The main ingredients will be Propositions \ref{M prop}, \ref{F prop}, \ref{prop 3.2} and \ref{main prop}.

\begin{proof}[Proof of Theorem \ref{main thm}]
   Setting $Y_1(X):=\left(\frac{X}{496}\right)^{\frac{1}{6}}$, we recall that Proposition \ref{M prop} asserts that for any elliptic curve $E\in \cS_\ell\left(Y_1(X)\right)$, the pair $\gamma_\ell(E)$ belongs to $\cMY{X}$.
   This implies that 
   \[\begin{split}\#\cS_\ell(Y_1(X)) & \leq \sum_{(L, \rho)\in \cMX} \# \{E\in \cC(Y_1(X))\mid \gamma_\ell(E)=(L, \rho)\};\\ &\leq  \sum_{(L, \rho)\in \cMX}\# \{E\in \cC(Y_1(X))\mid L_\ell(E)=L\}; \\ 
   & = \sum_{(L, \rho)\in \cMX}\# \cD_L(Y_1(X)) \\
   & \leq C_2 Y_1(X)^{9/2}\log Y_1(X) \#\cMX  ;\end{split}\]
   where in the final inequality, we invoke Proposition \ref{main prop}. We reiterate here that $C_2$ depends only on $\ell$, and is independent of the extension $L/\Q$. It follows from Proposition \ref{prop 3.2} that there is a constant $C_1>0$, such that for all $X>0$, we have that $\#\cS_\ell(X)>C_1X^5$. Thus, we find that 
   \[\begin{split}
   \# \cMX &\geq \frac{\#\cS_\ell(Y_1(X))}{C_2 Y_1(X)^{9/2}\log Y_1(X) };\\
   & \geq \frac{C_1(Y_1(X))^5}{C_2 Y_1(X)^{9/2}\log Y_1(X) }\geq C\frac{X^{\frac{1}{12}}}{\log X},
   \end{split}\]
   for some constant $C>0$.
   \par Setting $Y_2(X):=\left(\frac{X}{6^{3264}\ell^{C_\ell}}\right)^{\frac{\ell}{6(\ell-1)\#\op{GL}_2(\F_\ell)}}$, we recall that Proposition \ref{F prop} asserts that for any elliptic curve $E\in \cS_\ell\left(Y_2(X)\right)$, the division field $L_\ell(E)$ belongs to $\cFY{X}$. By the same argument as above, we find that 
    \[\begin{split}
   \# \cFX &\geq \frac{\#\cS_\ell(Y_2(X))}{C_2 Y_2(X)^{9/2}\log Y_2(X) };\\
   & \geq \frac{C_1(Y_2(X))^5}{C_2 Y_2(X)^{9/2}\log Y_2(X) }\geq C\frac{X^{\frac{\ell}{12(\ell-1)\#\op{GL}_2(\F_\ell)}}}{\log X},
   \end{split}\]for some constant $C>0$.
\end{proof}
\subsection*{Data availability} No data was generated or analyzed in this paper.
\bibliographystyle{alpha}
\bibliography{references}
\end{document}